\newtheorem{theorem}{Theorem}[section]
\newtheorem*{proposition*}{Proposition}
\newtheorem{proposition}[theorem]{Proposition}
\newtheorem{corollary}[theorem]{Corollary}
\theoremstyle{definition}
\newtheorem{definition}[theorem]{Definition}
\theoremstyle{remark}
\newtheorem{remark}[theorem]{Remark}
\numberwithin{equation}{section}
\newcommand{\rr}[1]{#1}
\newcommand{\R}{\mathbb{R}}
\newcommand{\K}{\mathbb{K}}
\newcommand{\C}{\mathbb{C}}
\newcommand{\V}{\mathbb{V}}
\newcommand{\mC}{\mathcal{C}}
\newcommand{\bC}{\mathbf{C}}
\newcommand{\bX}{\mathbf{X}}
\newcommand{\bY}{\mathbf{Y}}
\newcommand{\bZ}{\mathbf{Z}}
\newcommand{\frob}{\mathsf{F}}
\newcommand{\abs}[1]{{\left\lvert #1 \right\rvert}}
\DeclareMathOperator{\rank}{rank}
\DeclareMathOperator{\App}{App}
\begin{document}

\title[Orthogonal tensors and rank-1 approximation ratio]
{On orthogonal tensors and best rank-one approximation ratio}


\author{Zhening Li}
\address{Department of Mathematics, University of Portsmouth, Portsmouth, Hampshire PO1 3HF, United Kingdom}
\curraddr{}
\email{zheningli@gmail.com}
\thanks{}

\author{Yuji Nakatsukasa}
\address{Mathematical Institute, University of Oxford, Oxford OX2 6GG, United Kingdom
}
\curraddr{}
\email{nakatsukasa@maths.ox.ac.uk}
\thanks{YN is supported by JSPS as an Overseas Research Fellow}

\author{Tasuku Soma}
\address{
Graduate School of Information Science \& Technology,
            University of Tokyo,
            7-3-1 Hongo, Bunkyo-ku, Tokyo, Japan }
\curraddr{}
\email{tasuku$\_$soma@mist.i.u-tokyo.ac.jp}
\thanks{TS is supported by JST CREST Grant Number JPMJCR14D2, Japan}

\author{Andr\'e Uschmajew}
\address{     Hausdorff Center for Mathematics \& Institute for Numerical Simulation, University of Bonn, 53115 Bonn, Germany}
\curraddr{Max Planck Institute for Mathematics in the Sciences, 04103 Leipzig, Germany}
\email{uschmajew@mis.mpg.de}
\thanks{}

\subjclass[2010]{
15A69,  	
15A60,  	
17A75}  	

\date{}

\dedicatory{}

\keywords{Orthogonal tensor, rank-one approximation, spectral norm, nuclear norm, Hurwitz problem}

\begin{abstract}

As is well known, the smallest possible ratio between the spectral norm and the Frobenius norm of an $m \times n$ matrix with $m \le n$ is $1/\sqrt{m}$ and is (up to scalar scaling) attained only by matrices having pairwise orthonormal rows. In the present paper, the smallest possible ratio between spectral and Frobenius norms of $n_1 \times \dots \times n_d$ tensors of order $d$, also called the best rank-one approximation ratio in the literature, is investigated. The exact value is not known for most configurations of $n_1 \le \dots \le n_d$. Using a natural definition of orthogonal tensors over the real field (resp., unitary tensors over the complex field), it is shown that the obvious lower bound $1/\sqrt{n_1 \cdots n_{d-1}}$ is attained if and only if a tensor is orthogonal (resp., unitary) up to scaling. Whether or not orthogonal or unitary tensors exist depends on the dimensions $n_1,\dots,n_d$ and the field. A connection between the (non)existence of real orthogonal tensors of order three and the classical Hurwitz problem on composition algebras can be established: existence of orthogonal tensors of size $\ell \times m \times n$ is equivalent to the admissibility of the triple $[\ell,m,n]$ to the Hurwitz problem. Some implications for higher-order tensors are then given. For instance, real orthogonal $n \times \dots \times n$ tensors of order $d \ge 3$ do exist, but only when $n = 1,2,4,8$. In the complex case, the situation is more drastic: unitary tensors of size $\ell \times m \times n$ with $\ell \le m \le n$ exist only when $\ell m \le n$. Finally, some numerical illustrations for spectral norm computation are presented.
\end{abstract}

\maketitle


\section{Introduction}

Let $\K$ be $\R$ or $\C$. Given positive integers $d \ge 2$ and $n_1,\dots,n_d$, we consider the tensor product
\[
\V = V^1 \otimes \dots \otimes V^d
\]
of Euclidean $\K$-vector spaces $V^1,\dots,V^d$ of dimensions $\dim(V^\mu) = n_\mu$, $\mu=1,\dots,d$. The space $\V$ is generated by the set of \emph{elementary} (or \emph{rank-one}) tensors
\[
\mC_{1} = \{ u^1 \otimes \dots \otimes u^d \colon u^1 \in V^1, \dots, u^d \in V^d \}.
\]
In general, elements of $\V$ are called tensors. The natural inner product on the space $\V$ is uniquely determined by its action on decomposable tensors via
\[
\langle u^1 \otimes \dots \otimes u^d, v^1 \otimes \dots \otimes v^d \rangle_\frob = \prod_{\mu=1}^d \langle u^\mu, v^\mu \rangle_{V^i}.
\]	
This inner product is called the Frobenius inner product, and its induced norm is called the Frobenius norm, denoted by $\| \cdot \|_\frob$.

\subsection{Spectral norm and best rank-one approximation}

The \emph{spectral norm} (also called \emph{injective norm}) of a tensor $\bX \in \V$ is defined as
\begin{equation}\label{eq: def spectral norm}
\| \bX \|_2 = \max_{\substack{\bY \in \mC_1 \\ \| \bY \|_\frob = 1 }} \abs{\langle \bX , \bY \rangle_\frob} = \max_{\| u^1 \|_{V^1} = \dots = \| u^d \|_{V^d} = 1} \abs{\langle \bX , u^1 \otimes \dots \otimes u^d \rangle_\frob}.
\end{equation}
Note that the second $\max$ is achieved by some $u^1 \otimes \dots \otimes u^d$ since the spaces $V^\mu$'s are finite dimensional. Hence the first $\max$ is also achieved. Checking the norm properties is an elementary exercise.

Since the space $\V$ is finite dimensional, the Frobenius norm and spectral norm are equivalent. It is clear from the Cauchy--Schwarz inequality that
\[
\| \bX \|_2 \le \| \bX \|_\frob.
\]
The constant one in this estimate is optimal, since equality holds for elementary tensors.

For the reverse estimate, the maximal constant $c$ in
\[
c \| \bX \|_\frob \le \| \bX \|_2
\]
is unknown in general and may depend not only on $d$, $n_1,\dots,n_d$ but also on $\K$. Formally, the optimal value is defined as
\begin{equation}\label{eq: definition of App}
\App(\V) \equiv \App_d(\K ; n_1,\dots,n_d) \coloneqq \min_{\bX \neq 0} \frac{\| \bX \|_2}{\| \bX \|_\frob} =  \min_{\| \bX \|_\frob = 1} \| \bX \|_2.
\end{equation}
Note that by continuity and compactness, there always exists a tensor $\bX$ achieving the minimal value.

The task of determining the constant $\App(\V)$ was posed by Qi~\cite{Qi2011}, who called it the \emph{best-rank one approximation ratio} of the tensor space $\V$. This terminology originates from the important geometrical fact that the spectral norm of a tensor measures its approximability by elementary tensors. To explain this, we first recall that $\mC_1$, the set of elementary tensors, is closed and hence
every tensor $\bX$ admits a best approximation (in Frobenius norm) in $\mC_1$. Therefore, the problem of finding $\bY_1 \in \mC_1$ such that
\begin{equation}\label{eq: best rank-one approximation problem}
\| \bX - \bY_1 \|_\frob = \inf_{\bY \in \mC_1} \| \bX - \bY \|_\frob
\end{equation}
has at least one solution. Any such solution is called a \emph{best rank-one approximation to $\bX$}. The relation between the best rank-one approximation of a tensor and its spectral norm is given as follows.
\begin{proposition}\label{prop}
A tensor $\bY_1 \in \mC_1$ is a best rank-one approximation to $\bX \neq 0$ if and only if the following holds:
\[
\| \bY_1 \|_\frob = \left\langle \bX , \frac{\bY_1}{\| \bY_1 \|_\frob} \right\rangle_\frob  = \| \bX \|_2.
\]
Consequently,
\begin{equation}\label{eq: rank-one error}
\| \bX - \bY_1 \|_\frob^2 = \| \bX \|_\frob^2 - \| \bX \|_2^2.
\end{equation}
\end{proposition}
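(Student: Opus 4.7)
The plan is to exploit that $\mC_1$ is a cone (closed under scalar multiplication by any $t \in \K$). Hence every nonzero $\bY \in \mC_1$ factors as $\bY = t \bZ$ with $\bZ \in \mC_1$, $\|\bZ\|_\frob = 1$, and $t = \|\bY\|_\frob > 0$; over $\C$ the phase of $\bZ$ can moreover be chosen freely since $e^{i\theta}\bZ \in \mC_1$. For such a parametrization,
\[
\| \bX - t \bZ \|_\frob^2 = \| \bX \|_\frob^2 - 2 t\, \mathrm{Re}\, \langle \bX, \bZ \rangle_\frob + t^2,
\]
so minimizing jointly over $t \ge 0$ and unit $\bZ \in \mC_1$ amounts to first rotating $\bZ$ so that $\langle \bX, \bZ \rangle_\frob$ is real and nonnegative, and then completing the square in $t$. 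This reduces the whole problem to maximizing $|\langle \bX, \bZ \rangle_\frob|$ over unit elementary $\bZ$, which by definition~\eqref{eq: def spectral norm} is $\| \bX \|_2$, with optimal $t = \| \bX \|_2$.

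Since the maximum in~\eqref{eq: def spectral norm} is attained (by the compactness remark just after the definition), a best rank-one approximation exists, and the minimum value of $\|\bX - \bY\|_\frob^2$ equals $\|\bX\|_\frob^2 - \|\bX\|_2^2$, which is exactly~\eqref{eq: rank-one error}. Note that $\bX \neq 0$ forces $\| \bX \|_2 > 0$ (because $\mC_1$ spans $\V$), so any best rank-one approximation $\bY_1$ is itself nonzero and the quantity $\bY_1 / \|\bY_1\|_\frob$ is well defined.

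For the iff, I simply read off the equality conditions of the two-step minimization. If $\bY_1 = t_1 \bZ_1$ is a best approximation (with $\|\bZ_1\|_\frob=1$ and $t_1 = \|\bY_1\|_\frob$), then $\bZ_1$ must attain $\|\bX\|_2$ and $t_1 = \|\bX\|_2$; combined with $t_1 \ge 0$, this pins the complex number $\langle \bX, \bZ_1 \rangle_\frob$ to be the real nonnegative value $\|\bX\|_2$, giving exactly the stated chain of equalities. Conversely, any $\bY_1$ satisfying those equalities achieves the minimal distance by direct substitution into $\|\bX-\bY_1\|_\frob^2$. The one delicate point is the phase bookkeeping in the complex case; apart from that, the argument is a routine quadratic optimization and I do not foresee a genuine obstacle.
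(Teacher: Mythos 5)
Your argument is correct and is exactly the ``least-squares argument based on the fact that $\mC_1$ is a $\K$-double cone'' that the paper merely sketches after Proposition~\ref{prop}: parametrize $\bY = t\bZ$, absorb the phase, complete the square in $t$, and read off the equality conditions. You have simply written out in full the same folklore proof the paper alludes to, including the correct handling of the complex phase and of the degenerate case $\bY_1 = 0$.
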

The original reference for this observation is hard to trace back; see, e.g.,~\cite{KoldaBader2009}. It is now considered a folklore. The proof is easy from some least-square argument based on the fact that $\mC_1$ is a $\K$-double cone, i.e., $\bY \in \mC_1$ implies $t \bY \in \mC_1$ for all $t \in \K$.

By Proposition~\ref{prop}, the rank-one approximation ratio $\App(\V)$ is equivalently seen as the worst-case angle between a tensor and its best rank-one approximation:
\[
\App(\V) = \min_{\bX \neq 0} \frac{\abs{\langle \bX , \bY_1 \rangle_\frob}}{\| \bX \|_\frob \cdot \| \bY_1 \|_\frob},
\]
where $\bY_1\in\mC_1$ depends on $\bX$. As an application, the estimation of $\App(\V)$ from below has some important implications for the analysis of truncated steepest descent methods for tensor optimization problems; see~\cite{Uschmajew2015}.

Combining~\eqref{eq: definition of App} and~\eqref{eq: rank-one error} one obtains
\begin{equation*}\label{eq: useful formulation}
\App(\V)^2 =  1 - \max_{\| \bX \|_\frob = 1} \min_{\bY \in \mC_1} \| \bX - \bY \|_\frob^2.
\end{equation*}

\subsection{Nuclear norm}

The nuclear norm (also called \emph{projective norm}) of a tensor $\bX \in \V$ is defined as
\begin{equation}\label{eq: definition of nuclear norm}
 \| \bX \|_* =  \inf \left\{ \sum_k \| \bZ_k \|_\frob \colon \bX = \sum_k \bZ_k \text{ with } \bZ_k \in \mC_1 \right\}.
\end{equation}
It is known \rr{(see, e.g., \cite[Thm.~2.1]{CobosKuehnPeetre1992})}
that the dual of the nuclear norm is the spectral norm (in tensor products of Banach spaces the spectral norm is usually defined in this way):
\[
 \| \bX \|_2 = \max_{\| \bY \|_* = 1} \abs{\langle \bX, \bY \rangle_\frob }.
\]
By a classic duality principle in finite-dimensional spaces (see, e.g.,~\cite[Thm.~5.5.14]{HornJohnson1985}), the nuclear norm is then also the dual of the spectral norm:
\[
 \| \bX \|_* = \max_{\| \bY \|_2 = 1} \abs{\langle \bX, \bY \rangle_\frob }.
\]
\rr{It can be shown that this remains true in tensor products of infinite-dimensional Hilbert spaces~\cite[Thm.~2.3]{CobosKuehnPeetre1992}}.

Either one of these duality relations immediately implies that
\begin{equation}\label{eq: duality estimate}
 \| \bX \|_\frob^2 \le \|\bX\|_2 \| \bX \|_*.
\end{equation}
In particular, $\| \bX \|_\frob \le \| \bX \|_*$ and equality holds if and only if $\bX$ is an elementary tensor.

Regarding the sharpest norm constant for an inequality $\| \bX \|_* \le c \| \bX\|_\frob$, it is shown in~\cite[Thm.~2.2]{DerksenFriedlandLimWang2017} that
\begin{equation}\label{eq: equivalence for spectral and nuclear}
 \max_{\bX \neq 0} \frac{\| \bX \|_*}{\| \bX \|_\frob} = \left( \min_{\bX \neq 0} \frac{\| \bX \|_2}{\| \bX \|_\frob}  \right)^{-1} = \frac{1}{\App(\V)}.
\end{equation}
This is a consequence of the duality of the nuclear and spectral norms. Moreover, the extremal values for both ratios are achieved by the same tensors $\bX$.

Consequently, determining the exact value of $\max_{\bX \neq 0} \| \bX \|_* / \| \bX \|_\frob$ is equivalent to determining $\App(\V)$. An obvious bound that follows from the definition~\eqref{eq: definition of nuclear norm} and the Cauchy--Schwarz inequality is
\begin{equation}\label{eq: trivial bound for nuclear norm}
 \frac{\| \bX \|_*}{\| \bX \|_\frob} \le \sqrt{\rank_\bot(\bX)} \le \sqrt{\min_{\nu =1,\dots,d} \prod_{\mu \neq \nu} n_\mu},
\end{equation}
where $\rank_\bot(\bX)$ is the orthogonal rank of $\bX$; cf. section~\ref{sec: orthogonal rank}.

\subsection{Matrices}\label{sec: matrices}

It is instructive to inspect the matrix case. In this case, it is well known that
\begin{equation}\label{eq: App for matrices}
\App_2(\K;m,n) = \frac{1}{\sqrt{\min(m,n)}}.
\end{equation}
In fact, let $\bX \in \K^{m \times n}$ have $\rank(\bX) = R$ and
\[
\bX = \sum_{k=1}^{R} \sigma_k u_k \otimes v_k
\]
be a singular value decomposition (SVD) with orthonormal systems $\{u_1,\dots,u_R\}$ and $\{v_1,\dots,v_R\}$, and $\sigma_1 \ge \sigma_2 \ge \dots \ge \sigma_R > 0$. Then by a well-known theorem~\cite[Thm.~2.4.8]{golubbook4th} the best rank-one approximation of $\bX$ in Frobenius norm is given by
\[
\bX_1 = \sigma_1 u_1 \otimes v_1,
\]
producing an approximation error
\[
\| \bX - \bX_1 \|_\frob^2 = \sum_{k=2}^R \sigma_k^2.
\]
The spectral norm is
\[
\| \bX \|_2 = \| \bX_1 \|_\frob = \sigma_1 \ge \frac{\| \bX \|_\frob}{\sqrt{R}}.
\]
Here equality is attained only for a matrix with $\sigma_1 = \dots = \sigma_R = \frac{\| \bX \|_\frob}{\sqrt{R}}$. Obviously,~\eqref{eq: App for matrices} follows when $R = \min(m,n)$. Hence, assuming $m \le n$, we see from the SVD that a matrix $\bX$ achieving equality satisfies $\bX \bX^H = \frac{\| \bX \|_\frob^2}{m} I_{m}$ with $I_m$ the $m \times m$ identity matrix, that is, $\bX$ is a multiple of a matrix with pairwise orthonormal rows.

Likewise it holds for the nuclear norm of a matrix that
\[
 \| \bX \|_* = \sum_{k=1}^R \sigma_k \le \sqrt{\min(m,n)} \cdot \| \bX \|_\frob,
\]
and equality is achieved (in the case $m \le n$) if and only if $\bX$ is a multiple of a matrix with pairwise orthonormal rows.

\subsection{Contribution and outline}

As explained in section~\ref{sec: orthogonal rank} below, it is easy to deduce the ``trivial'' lower bound
\begin{equation}\label{eq: trivial lower bound}
\App_d(\K; n_1, \dots,n_d) \ge \frac{1}{\sqrt{\min_{\nu = 1,\dots,d} \prod_{\mu \neq \nu} n_\mu}}
\end{equation}
for the best rank-one approximation ratio of a tensor space. From~\eqref{eq: App for matrices} we see that this lower bound is sharp for matrices for any $(m,n)$ and is attained only at matrices with pairwise orthonormal rows or columns, or their scalar multiples (in this paper, with a slight abuse of notation, we call such matrices \emph{orthogonal} when $\K=\R$ (resp., \emph{unitary} when $\K=\C$)). A key goal in this paper is to generalize this fact to higher-order tensors.

First in section~\ref{sec: previous results} we review some characterizations of spectral norm and available bounds on the best-rank one approximation ratio.

In section~\ref{sec: orthogonal tensors} we show that the trivial rank-one approximation ratio~\eqref{eq: trivial lower bound} is achieved if and only if a tensor is a scalar multiple of an \emph{orthogonal} (resp., \emph{unitary}) tensor, where the notion of orthogonality (resp., unitarity) is defined in a way that generalizes orthogonal (resp., unitary) matrices very naturally. We also prove corresponding extremal properties of orthogonal (resp., unitary) tensors regarding the ratio of the nuclear and Frobenius norms.

We then study in section~\ref{sec: existence} further properties of orthogonal tensors, in particular focusing on their existence. Surprisingly, unlike the matrix case where orthogonal/unitary matrices exist for any $(m,n)$, orthogonal tensors often do not exist, depending on the configuration of $(n_1,\ldots,n_d)$ and the field $\K$. In the first nontrivial case $d=3$ over $\K=\mathbb{R}$,
we show that the (non)existence of orthogonal tensors is connected to the classical \emph{Hurwitz problem}. This problem has been studied extensively, and in particular a result by Hurwitz himself~\cite{Hurwitz1898} implies that an $n\times n\times n$ orthogonal tensor exists only for $n=1,2,4$, and $8$, and is then essentially equivalent to a multiplication tensor in the corresponding composition algebras on $\R^n$. These algebras are the reals ($n=1$), the complex numbers ($n=2$), the quaternions ($n=4$), and the octonions ($n=8$). We further generalize Hurwitz's result to the case $d>3$. These observations might give an impression that considering orthogonal tensors is futile. However, the situation is vastly different when the tensor is not cubical, that is, when $n_\mu$'s take different values. While a complete analysis of the (non)existence of noncubic real orthogonal tensors is largely left an open problem, we investigate this problem and derive some cases where orthogonal tensors do exist. 
When $\K=\C$, the situation turns out to be more restrictive: we show that when $d\ge3$, unitary cubic tensors do not exist unless trivially $n=1$, and noncubic ones do exist only in the trivial case of extremely ``tall'' tensors, that is, if $n_\nu \ge \prod_{\mu \neq \nu} n_\mu$ for some dimension $n_\nu$.

Unfortunately, we are currently unable to provide the exact value or sharper lower bounds on the best rank-one approximation ratio of tensor spaces where orthogonal (resp., unitary) tensors do not exist. The only thing we can conclude is that in these spaces the bound~\eqref{eq: trivial lower bound} is not sharp. For example,
\[
\App_3(\R;n,n,n) > \frac{1}{n}
\]
for all $n \neq 1,2,4,8$. However, recent results on random tensors imply that the trivial lower bound provides the correct order of magnitude, that is,
\[
\App_d(\K; n_1, \dots,n_d) = O\left( \frac{1}{\sqrt{\min_{\nu = 1,\dots,d} \prod_{\mu \neq \nu} n_\mu}}\right),
\]
at least when $\K = \R$; see section~\ref{sec: random results}.

Some numerical experiments for spectral norm computation are conducted in section~\ref{sec: accurate computation}, comparing algorithms from the Tensorlab toolbox~\cite{tensorlab} with an alternating SVD (ASVD) method proposed in~\cite[sec.~3.3]{DeLathauweretal2000b} and later in~\cite{friedland2013best}. In particular, computations for random $n \times n \times n$ tensors indicate that $\App_3(\R;n,n,n)$ behaves like $O(1/n)$.

\subsection*{Some more notational conventions}

For convenience, and without loss of generality, we will identify the space $\V$ with the space $\K^{n_1} \otimes \dots \otimes \K^{n_d} \cong \K^{n_1 \times \dots \times n_d}$ of $d$-way arrays $[\bX(i_1,\dots,i_d)]$ of size $n_1 \times \dots \times n_d$, where every $\K^{n_\mu}$ is endowed with a standard Euclidean inner product $x^H y$. This is achieved by fixing orthonormal bases in the space $V^\mu$.

In this setting, an elementary tensor has \emph{entries}
\[
[u^1 \otimes \dots \otimes u^d]_{i_1,\dots,i_d} =  u^1(i_1) \cdots u^d(i_d).
\]
The Frobenius inner product of two tensors is then
\[
\langle \bX, \bY \rangle_\frob = \sum_{i_1,\dots,i_d} \overline{\bX(i_1,\dots,i_d)} \bY(i_1,\dots,i_d).
\]
It is easy to see that the spectral norm defined below is not affected by the identification of $\K^{n_1 \times \dots \times n_d}$ and $V^1 \otimes \dots \times \otimes V^d$ in the described way. 

For readability it is also useful to introduce the notation
\[
\V^{[\mu]} \coloneqq \K^{n_1} \otimes \dots \otimes \K^{n_{\mu-1}} \otimes \K^{n_{\mu+1}} \otimes \dots \otimes \K^{n_d} \cong \K^{n_1 \times \dots \times n_{\mu-1} \times n_{\mu+1} \times \dots \times n_d},
\]
which is a tensor product space of order $d-1$. The set of elementary tensors in this space is denoted by $\mC_1^{[\mu]}$.

An important role in this work is played by slices of a tensor and their linear combinations. Formally, such linear combinations are obtained as partial contractions with vectors. We use standard notation~\cite{KoldaBader2009} for these contractions: let $\bX_{i_\mu} = \bX(\colon,\dots,\colon,i_\mu,\colon,\dots,\colon) \in \V^{[\mu]}$ for $i_\mu = 1,\dots,n_\mu$, denoting the slices of the tensor $\bX \in \K^{n_1 \times \dots \times n_d}$ perpendicular to mode $\mu$. Given $u^\mu \in \K^{n_\mu}$, the mode-$\mu$ product of $\bX$ and $u^\mu$ is defined as
\[
\bX \times_\mu u^\mu \coloneqq \sum_{i_\mu = 1}^{n_\mu} u(i_\mu) \bX_{i_\mu} \in \V^{[\mu]}.
\]
Correspondingly, partial contractions with more than one vectors are obtained by applying single contractions repeatedly, for instance,
\[
\bX \times_1 u^1 \times_2 u^2 :=  (\bX \times_2 u^2) \times_1 u^1 = (\bX \times_1 u^1) \times_1 u^2,
\]
where $\times_1 u^2$ in the last equality is used instead of $\times_2 u^2$ since the first mode of $\bX$ is vanished by $\times_1 u^1$. With this notation, we have
\begin{equation}\label{eq: recursive contraction}
\langle \bX, u^1 \otimes \dots \otimes u^d \rangle_\frob = \bX \times_1 u^1 \cdots \times_d u^d = \langle \bX \times_1 u^1, u^2 \otimes \dots \otimes u^d \rangle_\frob.
\end{equation}

\section{Previous results on best rank-one approximation ratio}\label{sec: previous results}

For some tensor spaces the best rank-one approximation ratio has been determined, most notably for $\K = \R$.

\rr{
K\"uhn and Peetre~\cite{KuehnPeetre2006} determined all values of $\App_3(\R;\ell,m,n)$ with $2 \le \ell \le m \le n \le 4$, except for $\App_3(\R;3,3,3)$. For $\ell = m=2$ it holds that
\[
 \App_3(\R;2,2,2) = \App_3(\R;2,2,3) = \App_3(\R;2,2,4) = \frac{1}{2}
\]
(the value of $\App_3(\R;2,2,2)$ was found earlier in~\cite{CobosKuehnPeetre2000}). The other values for $\ell = 2$ are
\begin{equation}\label{eq: other known values}
 \App_3(\R;2,3,3) = \frac{1}{\sqrt{5}}, \quad \App_3(\R;2,3,4) = \frac{1}{\sqrt{6}}, \quad \App_3(\R;2,4,4) = \frac{1}{\sqrt{8}},
\end{equation}
whereas for $\ell \ge 3$ it holds that
\[
 \App_3(\R;3,3,4) = \frac{1}{3}, \quad \App_3(\R;3,4,4) = \frac{1}{\sqrt{12}}, \quad \App_3(\R;4,4,4) = \frac{1}{4}.
\]
It is also stated in~\cite{KuehnPeetre2006} that
\[
 \App_3(\R;8,8,8) = \frac{1}{8},
\]
and the value $\App_3(\R;3,3,3)$ is estimated to lie between $1/\sqrt{7.36}$ and $1/\sqrt{7}$. 
}

\rr{
Note that in all cases listed above except $[2,3,3]$ and $[3,3,3]$, the naive lower bound~\eqref{eq: trivial lower bound} is hence sharp. In our paper we deduce this from the fact that the corresponding triples $[\ell,m,n]$ are admissible to the Hurwitz problem, while $[2,3,3]$ and $[3,3,3]$ are not; see section~\ref{sec: Hurwitz problem}. In fact, K\"uhn and Peetre obtained the values for $\App_3(\R;n,n,n)$ for $n=4,8$ by considering the tensors representing multiplication in the quaternion and octonion algebra, respectively, which are featured in our discussion as well; see in particular Theorem~\ref{th: Hurwitz result} and Corollary~\ref{cor: existence in small dimensions}.
}

\rr{
More general recent results by Kong and Meng~\cite{KongMeng2015} are
\begin{equation}\label{eq: kongetal}
 \App_3(\R;2,m,n) = \frac{1}{\sqrt{2m}} \quad \text{for $2 \le m \le n$ and $m$ even}
\end{equation}
and
\begin{equation}\label{eq: kongetal results}
 \App_3(\R;2,n,n) = \frac{1}{\sqrt{2n - 1}} \quad \text{for $n$ odd.}\footnote{\rr{In~\cite{KongMeng2015} it is incorrectly concluded from this that $\App_3(\R;2,m,n) = 1/\sqrt{2m - 1}$ whenever $2 \le m \le n$ and $m$ is odd. By~\eqref{eq: other known values}, this is not true for $m=3$, $n=4$ and is also false whenever $n \ge 2m$ by Proposition~\ref{prop: sharpness for tall tensors} below.}}
\end{equation}
Hence the naive bound~\eqref{eq: trivial lower bound} is sharp in the first case, but not in the second. Here, since obviously $\App_3(\R;2,m,n) \le \App_3(\R;2,m,m)$ for $m \le n$, it is enough to prove the first case for $m = n$ being even. Again, we can recover this result in section~\ref{sec: Hurwitz problem} by noting that the triple $[2,n,n]$ is always admissible to the Hurwitz problem when $n$ is even due to the classic Hurwitz--Radon formula~\eqref{eq: Radon-Hurwitz}. Admittedly, the proof of~\eqref{eq: kongetal} in~\cite{KongMeng2015} is simple enough.
}

The value $\App_d(\C;2,\dots,2)$ is of high interest in quantum information theory, where \emph{multiqubit states}, $\bX \in \C^{2\times \dots \times 2}$ with $\| \bX \|_\frob = 1$ are considered. The distance $1 - \| \bX \|_2^2$ of such a state to the set of \emph{product states}, that is, the distance to its best rank-one approximation (cf.~\eqref{eq: rank-one error}), is called the \emph{geometric measure of entanglement}. In this terminology, $1 - (\App_d(\C;2,\dots,2))^2$ is the value of the maximum possible entanglement of a multiqubit state. It is known that~\cite{CobosKuehnPeetre2000}
\[
  \App_3(\C;2,2,2) = \frac{2}{3}.
\]
This result was rediscovered later by Derksen et al.~\cite{DerksenFriedlandLimWang2017} based on the knowledge of the most entangled state in $\C^{2 \times 2 \times 2}$ due to~\cite{ChenXuZhu2010}. The authors of~\cite{DerksenFriedlandLimWang2017} have also found the value
\[
\App_4(\C;2,2,2,2) = \frac{\sqrt{2}}{3}.
\]
This confirms a conjecture in~\cite{HiguchiSudbery2000}. We can see that in these cases the trivial bound~\eqref{eq: trivial lower bound} is not sharp. In fact,~\cite{DerksenFriedlandLimWang2017} provides an estimate
\[
 \App_d(\C;2,\dots,2) \ge \left(\frac{2}{3} \right) \frac{1}{\sqrt{2^{d-3}}} = \left( \frac{4}{3} \right) \frac{1}{\sqrt{2^{d-1}}}, \quad d \ge 3,
\]
so the bound~\eqref{eq: trivial lower bound} is never sharp for multiqubits (except when $d=2$). The results in this paper imply that
\begin{equation}\label{eq: nonsharpness for higher-order complex}
 \App_d(\C;n_1,\dots,n_d) > \frac{1}{\sqrt{n_1\cdots n_{d-1}}}
\end{equation}
whenever $n_1 \le \dots \le n_d$ and $n_{d-2} n_{d-1} > n_d$ (Corollary~\ref{cor: main corollary} and Theorem~\ref{th: nonexistence in complex case}).

In the rest of this section we gather further strategies for obtaining bounds on the spectral norm and best rank-one approximation ratio. For brevity we switch back to the notation $\App(\V)$ when further specification is not relevant.

\subsection{Lower bounds from orthogonal rank}\label{sec: orthogonal rank}

Lower bounds of $\App(\V)$ can be obtained from expansion of tensors into pairwise orthogonal decomposable tensors. For any $\bX \in \V$, let $R$ be an integer such that
\begin{equation}\label{eq:orthogonal decomposition}
\bX = \sum_{k=1}^R \bZ_k
\end{equation}
with $\bZ_1,\dots,\bZ_R \in \mC_1$ being mutually orthogonal. We can assume that $\| \bZ_1 \|_\frob\geq \| \bZ_2 \|_\frob\geq\cdots \geq \| \bZ_R \|_\frob$, hence that $\| \bZ_1 \|_\frob \ge \frac{\| \bX \|_\frob}{\sqrt{R}}$, and so
\begin{equation}\label{eq: estimating by orthogonal rank}
\frac{\| \bX \|_2}{ \| \bX \|_\frob} \ge \frac{\abs{\langle \bX , \bZ_1 \rangle_\frob}}{\| \bX \|_\frob \cdot \| \bZ_1 \|_\frob} = \frac{\| \bZ_1 \|_\frob}{\| \bX \|_\frob} \ge \frac{1}{\sqrt{R}}.
\end{equation}
For each $\bX$ the smallest possible value of $R$ for which a decomposition~\eqref{eq:orthogonal decomposition} is possible is called the \emph{orthogonal rank} of $\bX$~\cite{Kolda2001}, denoted by $\rank_\bot(\bX)$. Then, it follows that
\begin{equation}\label{eq: estimate from orthogonal decomposition}
\App(\V) \ge \frac{1}{\sqrt{\rank_\bot(\bX)}} \quad \text{for all $\bX \in \V$.}
\end{equation}
A possible strategy is to estimate the maximal possible orthogonal rank of the space $\V$ (which is an open problem in general). For instance, the result~\eqref{eq: kongetal results} from~\cite{KongMeng2015} is obtained by estimating orthogonal rank.

The trivial lower bound~\eqref{eq: trivial lower bound} is obtained by noticing that every tensor can be decomposed into pairwise orthogonal elementary tensors that match the entries of the tensor in single parallel \emph{fibers}\footnote{A fiber is a subset of entries $(i_1,\dots,i_d)$ in a tensor, in which one index $i_\mu$ varies from $1$ to $n_\mu$, while other indices are kept fixed.} and are zero otherwise. Depending on the orientation of the fibers, there are $\prod_{\mu \neq \nu} n_\mu$ of them. Therefore,
\begin{equation}\label{eq: trivial bound for orthogonal rank}
 \max_{\bX \in \K^{n_1 \times \dots \times n_d}} \rank_\bot (\bX) \le \min_{\nu = 1,\dots,d} \prod_{\mu \neq \nu} n_\mu
\end{equation}
and~\eqref{eq: trivial lower bound} follows from~\eqref{eq: estimate from orthogonal decomposition}; see Figure~\ref{fig: fibers and normal form}(A).

\begin{figure}[h]
\centering
\subfloat[Orthogonal decomposition of a tensor into its longest fibers. A fiber of largest Euclidean norm provides a lower bound of the spectral norm.]{\makebox[.45\textwidth][c]{\includegraphics[width = .25\textwidth]{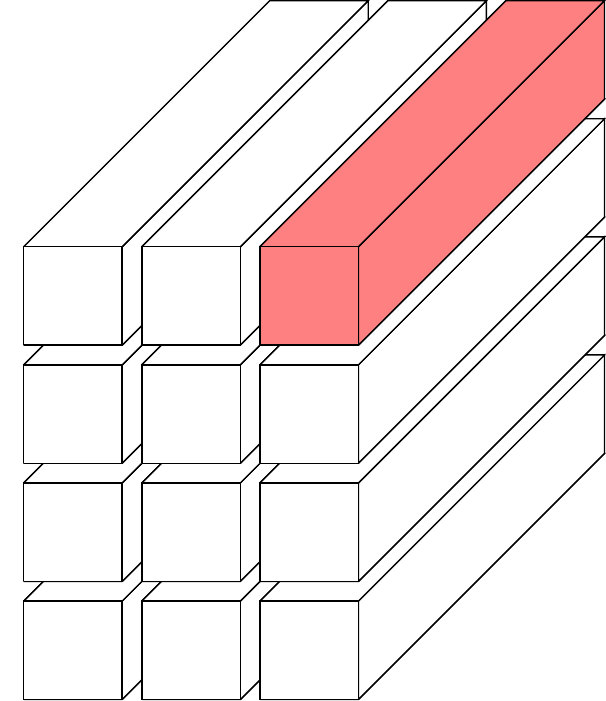}}}
\quad
\subfloat[Normal form using an orthonormal tensor product basis that includes a normalized best rank-one approximation. The red entry equals the spectral norm.]{\makebox[.45\textwidth][c]{\includegraphics[width = .25\textwidth]{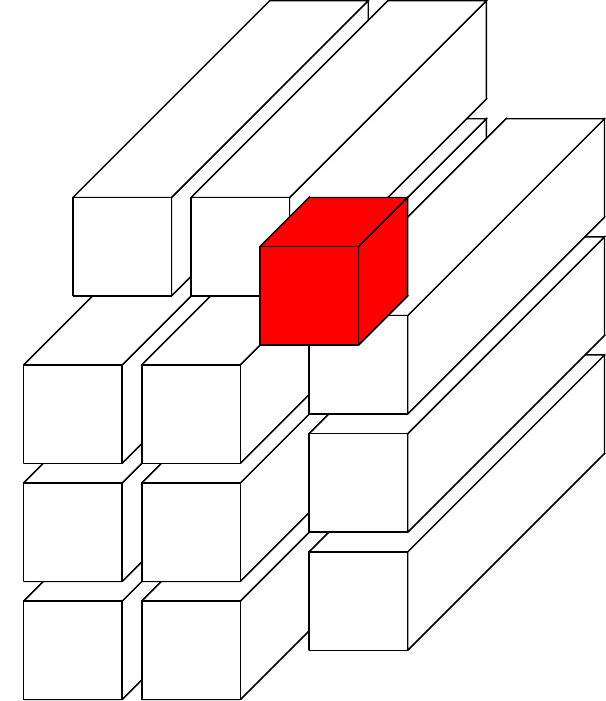}}}
\caption{Illustration of fibers and spectral normal form.}
 \label{fig: fibers and normal form}
\end{figure}

It is interesting and useful to know that after a suitable  orthonormal change of basis we can always assume that the entry $\bX(1,\dots,1)$ of a tensor $\bX \in \K^{n_1 \times \dots \times n_d}$ equals its spectral norm. In fact, let $\| \bX \|_2 \cdot (u^1_1 \otimes \dots \otimes u^d_1)$ be a best rank-one approximation of $\bX$ with $u^1_1,\dots,u^d_1$ all normalized to one. Then we can extend
$u_1^\mu$
to orthonormal bases $\{u^\mu_1, \dots, u^\mu_{n_\mu}\}$ for every $\mu$ to obtain a representation
\[
 \bX = \sum_{i_1=1}^{n_1} \cdots \sum_{i_d = 1}^{n_d} \bC(i_1,\dots,i_d) u^1_{i_1} \otimes \dots \otimes u^d_{i_d}.
\]
We may identify $\bX$ with its new coefficient tensor $\bC$; in particular, they have the same spectral norm. Since (see Proposition~\ref{prop} for the second equality)
\[
\bC(1,\dots,1) = \langle \bX, u^1_1 \otimes \dots \otimes u^d_1 \rangle_\frob = \| \bX \|_2 = \| \bC \|_2,
\]
and considering the overlap with fibers, we see that all other entries of any fiber that contains $\bC(1,\dots,1)$ must be zeros; see Figure~\ref{fig: fibers and normal form}(B). This ``spectral normal form'' of the tensor $\bX$ can be used to study uniqueness and perturbation of best rank-one approximation of tensors~\cite{JiangKong2015}. For our purposes, the following conclusion will be of interest and is immediately obtained by decomposing the tensor $\bC$ into fibers.

\begin{proposition}\label{prop: decomposition into normal form}
 Let $\bX \in \K^{n_1 \times \dots \times n_d}$. For any $\nu = 1,\dots,d$, there exists an orthogonal decomposition~\eqref{eq:orthogonal decomposition} into $R = \prod_{\mu \neq \nu} n_\mu$ mutually orthogonal elementary tensors $\bZ_k$ such that $\bZ_1$ is a best rank-one approximation of $\bX$. In particular, $\| \bZ_1 \|_\frob = \| \bX \|_2$.
\end{proposition}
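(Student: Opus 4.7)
The plan is to make the \emph{spectral normal form} described in the text fully precise and then simply read off the decomposition by slicing along mode $\nu$. Fix a unit-norm best rank-one approximation direction $u_1^1 \otimes \dots \otimes u_1^d$, so that $\langle \bX, u_1^1 \otimes \dots \otimes u_1^d\rangle_\frob = \|\bX\|_2$ by Proposition~\ref{prop}. For each $\mu$, complete $u_1^\mu$ to an orthonormal basis $\{u_1^\mu,\dots,u_{n_\mu}^\mu\}$ of $\K^{n_\mu}$, and let $\bC$ denote the array of coefficients of $\bX$ in the induced tensor-product orthonormal basis $\{u_{i_1}^1 \otimes \dots \otimes u_{i_d}^d\}$. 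The identification $\bX \leftrightarrow \bC$ preserves the Frobenius and spectral norms, and $\bC(1,\dots,1) = \|\bX\|_2$.

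The first key step will be to show that every entry of a fiber through $(1,\dots,1)$ other than $\bC(1,\dots,1)$ itself must vanish. Fix a mode $\mu$ and consider a unit vector $v = \sum_{i_\mu} c_{i_\mu} u_{i_\mu}^\mu$. Using the orthonormality of the basis vectors in the remaining modes,
\[
\langle \bX, u_1^1 \otimes \dots \otimes v \otimes \dots \otimes u_1^d \rangle_\frob = \sum_{i_\mu} \overline{c_{i_\mu}}\, \bC(1,\dots,1,i_\mu,1,\dots,1).
\]
By the definition of the spectral norm, the modulus of the left-hand side is bounded by $\|\bX\|_2 = \bC(1,\dots,1)$ for every unit $v$; taking the supremum over $v$ via Cauchy--Schwarz yields
\[
 \sum_{i_\mu} |\bC(1,\dots,1,i_\mu,1,\dots,1)|^2 \le |\bC(1,\dots,1)|^2,
\]
which forces all terms with $i_\mu \neq 1$ to vanish. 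This is the only step with any real content, and the main thing to get right; everything afterwards is bookkeeping.

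Next, decompose $\bC$ along the fibers in mode $\nu$. For each multi-index $\alpha = (i_1,\dots,i_{\nu-1},i_{\nu+1},\dots,i_d)$, define
\[
\bZ_\alpha \coloneqq u_{i_1}^1 \otimes \dots \otimes u_{i_{\nu-1}}^{\nu-1} \otimes w_\alpha \otimes u_{i_{\nu+1}}^{\nu+1} \otimes \dots \otimes u_{i_d}^d,
\]
where $w_\alpha \in \K^{n_\nu}$ is the mode-$\nu$ fiber $\bC(i_1,\dots,i_{\nu-1},\,\cdot\,,i_{\nu+1},\dots,i_d)$ expressed in the basis $\{u_j^\nu\}$. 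Clearly $\bX = \sum_\alpha \bZ_\alpha$, and for $\alpha \neq \alpha'$ the elementary tensors $\bZ_\alpha$ and $\bZ_{\alpha'}$ are orthogonal because the orthonormal basis vectors in at least one off-$\nu$ mode differ. There are exactly $R = \prod_{\mu \neq \nu} n_\mu$ such terms.

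Finally, label $\bZ_1$ the summand corresponding to $\alpha = (1,\dots,1)$. By the vanishing established in the second paragraph, the corresponding fiber $w_{(1,\dots,1)}$ has only its first entry nonzero, equal to $\bC(1,\dots,1) = \|\bX\|_2$. Hence
\[
 \bZ_1 = \|\bX\|_2 \cdot u_1^1 \otimes \dots \otimes u_1^d,
\]
which is a best rank-one approximation of $\bX$ with $\|\bZ_1\|_\frob = \|\bX\|_2$, as required. Relabeling the remaining $\bZ_\alpha$ as $\bZ_2,\dots,\bZ_R$ completes the proof.
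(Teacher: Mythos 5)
Your proposal is correct and follows essentially the same route as the paper: pass to the orthonormal tensor-product basis containing a normalized best rank-one approximation, note that $\bC(1,\dots,1)=\|\bX\|_2$ forces all other entries of the fibers through $(1,\dots,1)$ to vanish, and then read off the decomposition by slicing $\bC$ into mode-$\nu$ fibers. Your Cauchy--Schwarz step is just an explicit write-up of the paper's ``overlap with fibers'' argument, so the two proofs coincide in substance.
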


\subsection{Lower bounds from slices}

The spectral norm admits two useful characterizations in terms of slices. Let again $\bX_{i_\mu} = \bX(\colon,\dots,\colon,i_\mu,\colon,\dots,\colon) \in \V^{[\mu]}$ denote the slices of a tensor $\bX$ perpendicular to mode $\mu$. The following formula is immediate from~\eqref{eq: recursive contraction} and the commutativity of partial contractions:
\[
\| \bX \|_2 = \max_{\substack{u^\mu \in \K^{n_\mu} \\ \| u^\mu \|_{V^\mu} = 1 }}  \| \bX \times_\mu u^\mu \|_2
 = \max_{\substack{u^\mu \in \K^{n_\mu} \\ \| u^\mu \|_{V^\mu} = 1 }} \left\| \sum_{i_\mu = 1}^{n_\mu} u^\mu(i_\mu) \bX_{i_\mu} \right\|_2.
\]
By choosing $u^\mu=e_i$ (the $i$th column of the identity matrix), we conclude that
\begin{equation}\label{eq: lower bound from slices}
\| \bX_{i_\mu} \|_2 \le \| \bX \|_2
\end{equation}
for all slices.

We also have the following.
\begin{proposition}\label{prop: characterization of spectral norm}
\[
\| \bX \|_2 = \max_{\substack{\bZ \in \mC_1^{[\mu]} \\ \| \bZ \|_\frob = 1}} \left( \sum_{i_\mu = 1}^{n_\mu} \abs{\langle \bX_{i_\mu}, \bZ \rangle_\frob}^2 \right)^{1/2}.
\]
\end{proposition}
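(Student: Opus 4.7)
The plan is to derive this formula by splitting the single maximization in the definition~\eqref{eq: def spectral norm} of the spectral norm into two nested maximizations: first over the $(d-1)$-fold elementary tensor $\bZ \in \mC_1^{[\mu]}$ corresponding to the modes other than $\mu$, and second over the unit vector $u^\mu \in \K^{n_\mu}$. After a reordering of factors, every unit elementary tensor $\bY \in \mC_1$ can be written uniquely (up to a unimodular scalar) as $\bY = u^\mu \otimes \bZ$ with $\|u^\mu\|_{V^\mu} = 1$ and $\bZ \in \mC_1^{[\mu]}$ of unit Frobenius norm, so
\[
 \| \bX \|_2 = \max_{\| u^\mu \|_{V^\mu} = 1} \max_{\substack{\bZ \in \mC_1^{[\mu]} \\ \| \bZ \|_\frob = 1}} \abs{\langle \bX, u^\mu \otimes \bZ \rangle_\frob}.
\]

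Next I would expand the inner product by grouping summation indices with respect to mode $\mu$. Using the definition of the slices $\bX_{i_\mu}$, a direct computation gives
\[
 \langle \bX, u^\mu \otimes \bZ \rangle_\frob = \sum_{i_\mu = 1}^{n_\mu} \overline{u^\mu(i_\mu)} \, \langle \bX_{i_\mu}, \bZ \rangle_\frob.
\]
For any fixed $\bZ \in \mC_1^{[\mu]}$ with $\| \bZ \|_\frob = 1$, this is an inner product in $\K^{n_\mu}$ between $u^\mu$ and the vector $w \in \K^{n_\mu}$ with entries $w(i_\mu) = \langle \bX_{i_\mu}, \bZ \rangle_\frob$. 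By Cauchy--Schwarz, maximizing its modulus over unit vectors $u^\mu$ yields precisely $\| w \|_2 = \bigl( \sum_{i_\mu} \abs{\langle \bX_{i_\mu}, \bZ \rangle_\frob}^2 \bigr)^{1/2}$, with the maximum attained by choosing $u^\mu$ proportional to $w$.

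Substituting this into the expression for $\| \bX \|_2$ and then taking the supremum over $\bZ$ gives the claimed identity. The only minor obstacle is keeping track of indices when factoring out the mode $\mu$, in particular to ensure that swapping the order of the two maxima is harmless (which it is, since the $\max$ over a product set is the iterated $\max$ in either order). Nothing deeper is required.
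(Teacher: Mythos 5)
Your proposal is correct and follows essentially the same route as the paper's own proof: split the maximization over unit elementary tensors into nested maxima over $u^\mu$ and $\bZ \in \mC_1^{[\mu]}$, expand $\langle \bX, u^\mu \otimes \bZ\rangle_\frob$ as a sum over slices, and apply Cauchy--Schwarz in $\K^{n_\mu}$ to eliminate $u^\mu$. The only (harmless) discrepancy is a conjugate placed on $u^\mu(i_\mu)$ that does not match the paper's convention of conjugate-linearity in the first argument, but this disappears upon taking absolute values.
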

\begin{proof}
Since the spectral norm is invariant under permutation of indices, it is enough to show this for $\mu=1$. We can write
\[
\| \bX \|_2 =  \max_{\substack{\bZ \in \mC_1^{[1]} \\ \| \bZ \|_\frob = 1}} \max_{\| u^1 \|_{V^1} = 1}
\langle \bX, u^1 \otimes \bZ \rangle_\frob = \max_{\substack{\bZ \in \mC_1^{[1]} \\ \| \bZ \|_\frob = 1}} \max_{\| u^1 \|_{V^1} = 1} \sum_{i_1 = 1}^{n_1} \langle \bX_{i_1}, \bZ \rangle_\frob \cdot u^1(i_1).
\]
By the Cauchy--Schwarz inequality, the inner maximum is achieved for $u^1 = x / \| x \|$ with $x(i_1) = \overline{\langle \bX_{i_1}, \bZ \rangle_\frob}$ for $i_1=1,\dots,n_1$. This yields the assertion.
\end{proof}

\subsection{Upper bounds from matricizations}

Let $t \subsetneq \{1,\dots,d\}$ be nonempty. Then there exists a natural isometric isomorphism between the spaces $\K^{n_1 \times \dots \times n_d}$ and $\K^{\prod_{\mu \in t} n_\mu} \otimes \K^{\prod_{\nu \notin t}n_\nu}$. This isomorphism is called $t$-matricization (or $t$-flattening). More concretely, we can define two multi-index sets
\[
\mathbf{I}^t = \bigtimes_{\mu \in t} \{1,\dots,n_\mu\}, \quad \mathbf{J}^t = \bigtimes_{\nu \notin t} \{1,\dots,n_\nu\}.
\]
Then a tensor $\bX$ yields, in an obvious way, an $(\prod_{\mu \in t} n_\mu) \times (\prod_{\nu \notin t} n_\nu)$ matrix $\bX^t$ with entries
\begin{equation}\label{eq: t-matricization}
\bX^t(\mathbf{i},\mathbf{j}) = \bX(i_1,\dots,i_d), \quad \mathbf{i} \in \mathbf{I}^t, \ \mathbf{j} \in \mathbf{J}^t.
\end{equation}

The main observation is that $\bX^t$ is a rank-one matrix if $\bX$ is an elementary tensor (the converse is not true in general). Since we can always construct a tensor from its $t$-matricization, we obtain from the best-rank one approximation ratio for matrices that
\[
\App_d(\K;n_1,\dots,n_d) \le \frac{1}{\sqrt{\min\left( \prod_{\mu \in t} n_\mu, \prod_{\nu \notin t} n_\nu \right)}	}.
\]
This is because $\|\bX\|_2 \le \|\bX^t\|_2$ and $\|\bX\|_\frob = \|\bX^t\|_\frob$. Here the subset $t$ is arbitrary. In combination with~\eqref{eq: trivial lower bound}, this allows the following conclusion for tensors with one dominating mode size.

\begin{proposition}\label{prop: sharpness for tall tensors}
If there exists $\nu \in \{1,\dots,d\}$ such that $\prod_{\mu \neq \nu} n_\mu \le n_\nu$, then
\[
\App_d(\K;n_1,\dots,n_d) = \frac{1}{\sqrt{\min_{\nu = 1,\dots,d} \prod_{\mu \neq \nu} n_\mu}},
\]
that is, the trivial bound~\eqref{eq: trivial lower bound} is sharp.
\end{proposition}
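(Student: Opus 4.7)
The plan is to combine the trivial lower bound~\eqref{eq: trivial lower bound} with the upper bound from matricization just derived; under the tallness hypothesis $\prod_{\mu \neq \nu} n_\mu \le n_\nu$, these two estimates match exactly.

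First, I would invoke~\eqref{eq: trivial lower bound} to write
\[
\App_d(\K;n_1,\dots,n_d) \ge \frac{1}{\sqrt{\min_{\lambda = 1,\dots,d} \prod_{\mu \neq \lambda} n_\mu}},
\]
and then identify the minimizer explicitly. The hypothesis $n_\nu \ge \prod_{\mu \neq \nu} n_\mu$ implies $n_\lambda \le n_\nu$ for every $\lambda \neq \nu$, since each $n_\mu \ge 1$ forces $\prod_{\mu \neq \nu} n_\mu \ge n_\lambda$. Equivalently, $\nu$ maximizes $n_\lambda$, so it minimizes $\prod_{\mu \neq \lambda} n_\mu = (\prod_\mu n_\mu)/n_\lambda$, giving $\min_{\lambda} \prod_{\mu \neq \lambda} n_\mu = \prod_{\mu \neq \nu} n_\mu$.

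For the matching upper bound, I would choose the matricization index set $t = \{1,\dots,d\}\setminus\{\nu\}$, so that $\bX^t$ is a $\bigl(\prod_{\mu \neq \nu} n_\mu\bigr) \times n_\nu$ matrix. The tallness hypothesis tells us that the smaller of the two dimensions is precisely $\prod_{\mu \neq \nu} n_\mu$. The matricization estimate from the previous subsection therefore yields
\[
\App_d(\K;n_1,\dots,n_d) \le \frac{1}{\sqrt{\prod_{\mu \neq \nu} n_\mu}},
\]
which combined with the lower bound gives the asserted equality. There is no real obstacle here: the proposition is an immediate corollary of the matricization estimate once one observes that the tallness hypothesis is exactly the condition making the two bounds collapse onto the same value, namely $1/\sqrt{\prod_{\mu \neq \nu} n_\mu}$.
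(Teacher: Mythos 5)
Your proof is correct and follows exactly the route the paper intends: the proposition is stated there as an immediate consequence of combining the matricization upper bound (with $t = \{1,\dots,d\}\setminus\{\nu\}$) with the trivial lower bound~\eqref{eq: trivial lower bound}. Your explicit verification that the tallness hypothesis forces $\nu$ to be the minimizer of $\prod_{\mu \neq \lambda} n_\mu$ is a welcome detail the paper leaves implicit.
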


For instance, $\App_3(\K;n,n,n^2) = 1/n$.

\subsection{Upper bounds from random tensors}\label{sec: random results}

We conclude this section with some known upper bounds derived from considering random tensors. These results are obtained by combining coverings of the set of normalized (to Frobenius norm one) elementary tensors with concentration of measure results.

In~\cite{GrossFlamiaEisert2009}, Gross, Flammia, and Eisert showed that for $d\ge 11$ the fraction of tensors $\bX$ on the unit sphere in $\C^{2 \times \dots \times 2}$ satisfying
\[
\| \bX \|_2^2 \le \frac{1}{2^{d - 2 \log_2 d - 3}}
\]
is at least $1-e^{-d^2}$.

More recently, Tomioka and Suzuki~\cite{Tomioka2014} provided a simplified version of a result by Nguyen, Drineas, and Tran~\cite{NguyenDrineasTran2015}, namely that
\[
\| \bX \|_2^2 \le C \ln d \sum_{\mu = 1}^d n_\mu
\]
with any desired probability for real tensors with independent, zero-mean, sub-Gaussian entries satisfying $\mathbb{E}(\mathrm{e}^{t\bX{i_1,\ldots,i_d}})\leq \mathrm{e}^{\sigma^2t^2/2}$, as long as the constant $C$ is taken large enough.
For example, when the elements are independent and identically distributed Gaussian, we have
\begin{equation*}
    \| \bX \|_2^2 \leq C \ln d \sum_{\mu=1}^d n_\mu, \quad
    \| \bX \|_\frob^2 \geq C' n_1 \cdots n_d
\end{equation*}
with probability larger than $1/2$, respectively, where the second inequality follows from the tail bound of the $\chi$-squared distribution. Thus,
\begin{equation}
  \label{eq:randombound}
    \frac{\| \bX \|_2^2}{ \| \bX \|_\frob^2} \le \frac{C'' \ln d \sum_{\mu=1}^d n_\mu}{ n_1 \cdots n_d } \le
\frac{ C'' d \ln d}{\min_{\nu = 1,\dots,d} \prod_{\mu \neq \nu} n_\mu}
\end{equation}
with positive probability. This shows that the naive lower bound~\eqref{eq: trivial lower bound}, whether sharp or not, provides the right order of magnitude for $\App(\V)$ (at least when $\mathbb{K}=\mathbb{R}$).

For cubic tensors this was known earlier. By inspecting the expectation of spectral norm of random $n \times n \times n$ tensors, Cobos, K\"uhn, and Peetre~\cite{CobosKuehnPeetre1999} obtained the remarkable estimates
\begin{equation}\label{eq: asymptotic behavior nxnxn}
\frac{1}{n} \le \App_3(\R;n,n,n) \le \frac{3\sqrt{\pi}}{\sqrt{2}}\frac{1}{n}
\end{equation}
and
\[
  \frac{1}{n} \le \App_3(\C;n,n,n) \le {3\sqrt{\pi}}\frac{1}{n}.
\]
They also remark, without explicit proof, that $\App_d(\K;n,\dots,n) = O(1/\sqrt{n^{d-1}})$, in particular
\[
 \App_d(\R;n,\dots,n) \le \frac{d \sqrt{\pi}}{\sqrt{2}} \frac{1}{\sqrt{n^{d-1}}}.
\]
Note that the estimate~\eqref{eq:randombound} provides a slightly better scaling of $\App_d(\R;n\dots,n)$ with respect to $d$, namely, $\sqrt{d \ln d}$ instead of $d$.

\section{Orthogonal and unitary tensors}\label{sec: orthogonal tensors}

In this section we introduce the concept of orthogonal tensors. It is a ``natural'' extension of matrices with pairwise orthonormal rows or orthonormal columns. Orthogonal matrices play a fundamental role in both matrix analysis~\cite{HornJohnson1985} and numerical computation~\cite{golubbook4th,parlettsym}. Although the concept of orthogonal tensors was proposed earlier in~\cite{GnangElgammalRetakh2011}, we believe that our less abstract definition given below extends naturally from some properties of matrices with orthonormal rows or columns. As in the matrix case, we will see in the next section that orthogonality is a necessary and sufficient condition for a tensor to achieve the trivial bound~\eqref{eq: trivial lower bound} on the extreme ratio between spectral and Frobenius norms. However, it also turns out that orthogonality for tensors is a very strong property and in many tensor spaces (configurations of $(n_1,\dots,n_d)$ and the field $\K$) orthogonal tensors do not exist.

For ease of presentation we assume in the following that $n_1 \le \dots \le n_d$, but all definitions and results transfer to general tensors using suitable permutations of dimensions. In this sense, our recursive definition of orthogonal tensors generalizes matrices with pairwise orthonormal rows.

\begin{definition} \label{def:ot1}
A tensor of order one, i.e., a vector $u^1 \in \K^{n_1}$, is called \emph{orthogonal} for $\K = \R$ (resp., \emph{unitary} for $\K = \C$) if its Euclidean norm equals one (unit vector). Let $n_1 \le n_2 \le \dots \le n_d$. Then $\bX \in \K^{n_1 \times \dots \times n_d}$ is called \emph{orthogonal} for $\K = \R$ (resp., \emph{unitary} for $\K = \C$), if for every unit vector $u^1 \in \K^{n_1}$, the tensor $\bX \times_1 u^1$ is orthogonal (resp., unitary).
\end{definition}

Since partial contractions commute, one could use the following, slightly more general definition of orthogonal (unitary) tensors of order $d \ge 2$ (which, e.g., subsumes matrices with orthonormal rows or columns). Let $\nu$ be such that $n_{\nu} \ge n_\mu$ for all $\mu$. Then $\bX$ is orthogonal (unitary) if for any subset $S\subset\{1,2,\dots,d\} \setminus \{ \nu\}$ and any unit vectors $u^\mu\in\K^{n_\mu}$, the tensor $\bX\times_{\mu\in S} u^\mu$ of order $d-|S|$ is orthogonal (unitary). In particular, $\bX\times_\mu u^\mu$ is an orthogonal (unitary) tensor of order $d-1$ for any $\mu \neq \nu$. It is clear that $\bX$ will be orthogonal (unitary) according to this definition if and only if for any permutation $\pi$ of $\{1,\dots,d\}$ the tensor with entries $\bX(i_{\pi(1)},\dots,i_{\pi(d)})$ is orthogonal (unitary). Therefore, we can stick without loss of generality to consider the case where $n_1 \le \dots \le n_d$ and the Definition~\ref{def:ot1} of orthogonality (unitarity).

An alternative way to think of orthogonal and unitary tensors is as length-preserving $(d-1)$-form in the following sense. Every tensor $\bX \in \K^{n_1 \times \dots \times n_d}$ defines a $(d-1)$-linear form
\begin{equation}\label{eq:induced d-1 form}
\begin{gathered}
    \omega_\bX \colon {\K}^{n_1} \times \dots \times {\K}^{n_{d-1}} \to {\K}^{n_d}, \\
 (u^1,\dots,u^{d-1}) \mapsto \bX \times_1 u^1 \dots \times_{d-1} u^{d-1}.
\end{gathered}
\end{equation}
It is easy to obtain the following alternative, noninductive definition of orthogonal (unitary) tensors.

\begin{proposition}\label{prop:length-preserving}
Let $n_1 \le \dots \le n_d$. Then $\bX \in \K^{n_1 \times \dots \times n_d}$ is orthogonal (unitary) if and only if
\[
\| \omega_\bX(u^1,\dots,u^{d-1}) \|_2 = \prod_{\mu=1}^{d-1} \| u^\mu \|_2
\]
for all $u^1,\dots,u^{d-1}$.
\end{proposition}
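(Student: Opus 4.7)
The plan is to proceed by induction on the order $d \ge 2$, exploiting the fact that the inductive definition of orthogonal (unitary) tensors mirrors exactly the recursive structure of $\omega_\bX$, together with the multilinearity of partial contractions which allows one to reduce to the case of unit vectors.

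\textbf{Base case $d = 2$.} By Definition~\ref{def:ot1}, a matrix $\bX \in \K^{n_1 \times n_2}$ is orthogonal (unitary) if and only if $\bX \times_1 u^1$ is a unit vector for every unit vector $u^1 \in \K^{n_1}$. Since $\omega_\bX(u^1) = \bX \times_1 u^1$ is a linear function of $u^1$, this condition is equivalent, by positive homogeneity of the norm, to $\| \omega_\bX(u^1) \|_2 = \| u^1 \|_2$ for every $u^1 \in \K^{n_1}$.

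\textbf{Inductive step.} Assume the claim holds for order $d-1$, and let $\bX \in \K^{n_1 \times \dots \times n_d}$ with $n_1 \le \dots \le n_d$. If $\bX$ is orthogonal, then for every unit vector $u^1$ the contracted tensor $\bX \times_1 u^1 \in \K^{n_2 \times \dots \times n_d}$ is orthogonal (of order $d-1$, with dimensions still in nondecreasing order). By the inductive hypothesis,
\[
\| (\bX \times_1 u^1) \times_2 u^2 \cdots \times_{d-1} u^{d-1} \|_2 = \prod_{\mu=2}^{d-1} \| u^\mu \|_2
\]
for all $u^2,\dots,u^{d-1}$. For a nonzero $u^1$, writing $u^1 = \| u^1 \|_2 \cdot (u^1/\|u^1\|_2)$ and using linearity of $\times_1$ in its vector argument, one obtains $\| \omega_\bX(u^1,\dots,u^{d-1}) \|_2 = \prod_{\mu=1}^{d-1} \| u^\mu \|_2$; the case $u^1 = 0$ is trivial as both sides vanish. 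Conversely, if this identity holds for all tuples, then for every unit $u^1$ the tensor $\bX \times_1 u^1$ satisfies the hypothesis of the proposition at order $d-1$, hence is orthogonal by the inductive hypothesis; thus $\bX$ itself is orthogonal by Definition~\ref{def:ot1}.

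\textbf{Main point of care.} No substantive obstacle is expected: the whole statement is essentially an unrolling of the recursive definition. The only mild subtlety is to keep track of the homogeneity step that extends the condition from unit vectors (implicit in the definition) to arbitrary vectors (as in the statement), and to verify that after contracting along the first mode the surviving dimensions $n_2 \le \dots \le n_d$ still satisfy the monotonicity convention under which Definition~\ref{def:ot1} is phrased, so that the inductive hypothesis applies.
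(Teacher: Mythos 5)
Your proof is correct and is essentially the argument the paper has in mind: the paper omits a proof, declaring the equivalence "easy to obtain," and your induction on $d$ simply unrolls the recursive Definition~\ref{def:ot1}, using multilinearity of the contractions to pass between unit vectors and arbitrary vectors. The two points you flag as needing care (homogeneity and the preserved ordering $n_2 \le \dots \le n_d$ after contracting the first mode) are exactly the right ones, and both are handled correctly.
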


For third-order tensors this property establishes an equivalence between orthogonal tensors and the Hurwitz problem that will be discussed in section~\ref{sec: Hurwitz problem}. By considering subvectors of $u^1,\dots,u^{d-1}$, it further proves the following fact.

\begin{proposition}\label{prop: orthogonality of subtensors}
 Let $n_1 \le \dots \le n_d$ and $\bX \in \K^{n_1 \times \dots \times n_d}$ be orthogonal (unitary). Then any $n_1' \times \dots \times n_{d-1}' \times n_d$ subtensor of $\bX$ is also orthogonal (unitary).
\end{proposition}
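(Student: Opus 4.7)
My plan is to use Proposition~\ref{prop:length-preserving} as the working characterization of orthogonality/unitarity in both directions, since the inductive Definition~\ref{def:ot1} is awkward for passing to subtensors while the length-preservation property transfers transparently through zero-extension of vectors.

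First I would fix notation for the subtensor. A subtensor $\bX' \in \K^{n_1' \times \dots \times n_{d-1}' \times n_d}$ is obtained by choosing index subsets $I_\mu \subseteq \{1,\dots,n_\mu\}$ with $|I_\mu| = n_\mu'$ for $\mu = 1,\dots,d-1$, and setting $\bX'(j_1,\dots,j_{d-1},i_d) = \bX(i_1(j_1),\dots,i_{d-1}(j_{d-1}),i_d)$, where $i_\mu(\cdot)$ enumerates $I_\mu$. Note that $n_d$ remains the largest mode since $n_\mu' \le n_\mu \le n_d$, so that Proposition~\ref{prop:length-preserving} is applicable to $\bX'$ after (if necessary) reordering the first $d-1$ modes into increasing order; as discussed in the paragraph following Definition~\ref{def:ot1}, orthogonality is invariant under such permutations.

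Next I would verify the length-preserving identity for $\bX'$. Given arbitrary unit vectors $v^\mu \in \K^{n_\mu'}$ for $\mu = 1,\dots,d-1$, define $u^\mu \in \K^{n_\mu}$ by zero-extension: $u^\mu(i_\mu(j)) = v^\mu(j)$ for $j = 1,\dots,n_\mu'$ and $u^\mu(k) = 0$ for $k \notin I_\mu$. Then $\|u^\mu\|_2 = \|v^\mu\|_2 = 1$, and a direct unpacking of the contractions in~\eqref{eq:induced d-1 form} shows
\[
\omega_{\bX'}(v^1,\dots,v^{d-1}) = \omega_\bX(u^1,\dots,u^{d-1}),
\]
because the zero entries of $u^\mu$ annihilate exactly the slices of $\bX$ that were dropped in forming $\bX'$. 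Applying Proposition~\ref{prop:length-preserving} to the orthogonal (unitary) $\bX$ yields
\[
\|\omega_{\bX'}(v^1,\dots,v^{d-1})\|_2 = \|\omega_\bX(u^1,\dots,u^{d-1})\|_2 = \prod_{\mu=1}^{d-1} \|u^\mu\|_2 = 1 = \prod_{\mu=1}^{d-1} \|v^\mu\|_2.
\]
Homogeneity in each $v^\mu$ then extends this identity from unit vectors to arbitrary vectors, and Proposition~\ref{prop:length-preserving} applied in reverse to $\bX'$ concludes that $\bX'$ is orthogonal (resp.\ unitary).

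There is no real obstacle here; the only minor bookkeeping point is the reordering of the first $d-1$ modes to conform with the convention $n_1' \le \dots \le n_{d-1}'$ required by Definition~\ref{def:ot1}, which is handled by the permutation-invariance remark already noted in the paper.
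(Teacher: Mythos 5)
Your proposal is correct and is essentially the paper's own argument: the paper proves this proposition in a single sentence ("by considering subvectors of $u^1,\dots,u^{d-1}$" in Proposition~\ref{prop:length-preserving}), and your zero-extension computation of $\omega_{\bX'}(v^1,\dots,v^{d-1}) = \omega_\bX(u^1,\dots,u^{d-1})$ is precisely the detail that sentence elides. The remarks on homogeneity and on reordering the first $d-1$ modes are correct bookkeeping, nothing more.
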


We now list some extremal properties of orthogonal and unitary tensors related to the spectral norm, nuclear norm and orthogonal rank.

\begin{proposition}\label{prop: properties of orthogonal tensors}
 Let $n_1 \le \dots \le n_d$ and $\bX \in \K^{n_1 \times \dots \times n_d}$ be orthogonal or unitary. Then
 \begin{itemize}
  \item[\upshape (a)]
  $\displaystyle \| \bX \|_2 = 1, \quad \| \bX \|_\frob = \sqrt{\prod_{\mu = 1}^{d-1} n_\mu}, \quad \| \bX \|_* = \prod_{\mu = 1}^{d-1} n_\mu$,
  \item[\upshape (b)]
  $\rank_\bot (\bX)  = \displaystyle \prod_{\mu = 1}^{d-1} n_\mu$.
 \end{itemize}
\end{proposition}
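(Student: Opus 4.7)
The plan is to derive everything from Proposition~\ref{prop:length-preserving}, which tells us that $\| \omega_\bX(u^1,\dots,u^{d-1}) \|_2 = \prod_{\mu=1}^{d-1} \| u^\mu \|_2$ for all choices of arguments. For the spectral norm in (a), I would use the identity $\langle \bX, u^1 \otimes \dots \otimes u^d \rangle_\frob = \langle \omega_\bX(u^1,\dots,u^{d-1}), u^d \rangle$ that follows from~\eqref{eq: recursive contraction}. Cauchy--Schwarz then gives $| \langle \bX, u^1 \otimes \dots \otimes u^d \rangle_\frob | \le \| \omega_\bX(u^1,\dots,u^{d-1}) \|_2 = 1$ when all the $u^\mu$ are unit vectors, showing $\| \bX \|_2 \le 1$. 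Equality is attained by picking any unit $u^1,\dots,u^{d-1}$ and setting $u^d = \omega_\bX(u^1,\dots,u^{d-1})$, which is itself a unit vector by Proposition~\ref{prop:length-preserving}.

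For the Frobenius norm, I would evaluate $\omega_\bX$ on the standard orthonormal basis vectors $e_{i_\mu}$ of $\K^{n_\mu}$. A direct expansion gives $\omega_\bX(e_{i_1},\dots,e_{i_{d-1}}) = \sum_{i_d} \bX(i_1,\dots,i_d) e_{i_d}$, so $\| \omega_\bX(e_{i_1},\dots,e_{i_{d-1}}) \|_2^2 = \sum_{i_d} |\bX(i_1,\dots,i_d)|^2$. Summing over the remaining indices, $\| \bX \|_\frob^2 = \sum_{i_1,\dots,i_{d-1}} \| \omega_\bX(e_{i_1},\dots,e_{i_{d-1}}) \|_2^2 = \prod_{\mu=1}^{d-1} n_\mu$, since each summand equals $1$ by Proposition~\ref{prop:length-preserving}.

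For the nuclear norm and orthogonal rank, the key construction I would use is the explicit decomposition
\[
\bX = \sum_{i_1=1}^{n_1} \cdots \sum_{i_{d-1}=1}^{n_{d-1}} e_{i_1} \otimes \dots \otimes e_{i_{d-1}} \otimes \omega_\bX(e_{i_1},\dots,e_{i_{d-1}}).
\]
Each summand is an elementary tensor of Frobenius norm exactly $1$, and the $\prod_{\mu=1}^{d-1} n_\mu$ summands are pairwise orthogonal because their first $d-1$ factors disagree by an orthonormal basis vector. Plugging this into the definitions gives $\| \bX \|_* \le \prod_{\mu=1}^{d-1} n_\mu$ and $\rank_\bot(\bX) \le \prod_{\mu=1}^{d-1} n_\mu$. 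The matching lower bounds come for free from the results already established in the paper: $\| \bX \|_* \ge \| \bX \|_\frob^2 / \| \bX \|_2 = \prod_{\mu=1}^{d-1} n_\mu$ by~\eqref{eq: duality estimate}, and $\rank_\bot(\bX) \ge \| \bX \|_\frob^2 / \| \bX \|_2^2 = \prod_{\mu=1}^{d-1} n_\mu$ by~\eqref{eq: estimating by orthogonal rank}.

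I do not anticipate a genuine obstacle here: once Proposition~\ref{prop:length-preserving} is in hand, all five equalities reduce to two short computations (spectral norm via Cauchy--Schwarz, and an orthogonal decomposition along mode-$d$ fibers read off from the standard basis). The only delicate point is making sure that the decomposition I write down is truly orthogonal and that its summands really do have unit Frobenius norm, which both follow immediately from $\| \omega_\bX(e_{i_1},\dots,e_{i_{d-1}}) \|_2 = 1$ and the orthonormality of the $e_{i_\mu}$.
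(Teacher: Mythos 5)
Your proof is correct and follows essentially the same route as the paper: Cauchy--Schwarz via $\omega_\bX$ for $\|\bX\|_2\le 1$, the unit-norm fibers for $\|\bX\|_\frob$, and the mode-$d$ fiber decomposition together with the duality bound~\eqref{eq: duality estimate} for the nuclear norm and orthogonal rank. The only cosmetic difference is that you exhibit an explicit maximizer $u^d=\omega_\bX(u^1,\dots,u^{d-1})$ for $\|\bX\|_2\ge 1$ (as in Remark~\ref{rem:nonuniqueness}) and write out the fiber decomposition explicitly, whereas the paper invokes the trivial lower bound~\eqref{eq: trivial lower bound} and the general estimates~\eqref{eq: trivial bound for nuclear norm} and~\eqref{eq: trivial bound for orthogonal rank}, which encode the same decomposition.
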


\begin{proof}
 Ad (a). It follows from orthogonality that all fibers $\bX(i_1,\dots,i_{d-1},\colon)$ along dimension $n_d$ have norm one (because the fibers can be obtained from contractions with standard unit vectors). There are $\prod_{\mu=1}^{d-1} n_\mu$ of such fibers, hence $\| \bX \|_\frob^2 = \prod_{\mu=1}^{d-1} n_\mu$. From the trivial bound~\eqref{eq: trivial lower bound} it then follows $\| \bX \|_2 \ge 1$. On the other hand, by the Cauchy--Schwarz inequality and orthogonality (Proposition~\ref{prop:length-preserving}),
\begin{align*}
\abs{ \langle \bX, u^1 \otimes \dots \otimes u^d \rangle_\frob} &= \abs{ \langle \omega_\bX(u^1, \dots , u^{d-1}), u^d \rangle_{\K^{n_d}}} \\ &\le \| \omega_\bX ( u^1, \dots u^{d-1}) \|_2 \| u^d \|_2 \le \prod_{\mu=1}^d \| u^\mu \|_2.
\end{align*}
Hence $\| \bX \|_2 \le 1$. Now~\eqref{eq: duality estimate} and~\eqref{eq: trivial bound for nuclear norm} together give the asserted value of $\| \bX \|_*$.

Ad (b). Due to (a), this follows by combining~\eqref{eq: estimating by orthogonal rank} and~\eqref{eq: trivial bound for orthogonal rank}.
\end{proof}

Our main aim in this section is to establish that, as in the matrix case, the extremal values of the spectral and nuclear norms in Proposition~\ref{prop: properties of orthogonal tensors} fully characterize multiples of orthogonal and unitary tensors.

\begin{theorem}\label{th: main theorem for orthogonal tensors}
Let $n_1 \le \dots \le n_d$ and $\bX \in \K^{n_1 \times \dots \times n_d}$, $\bX \neq 0$. The following are equivalent:
\begin{itemize}
\item[\upshape (a)]
$\bX$ is a scalar multiple of an orthogonal (resp., unitary) tensor,
\item[\upshape (b)]
$\displaystyle \frac{\| \bX \|_2}{\| \bX \|_\frob} = \frac{1}{\sqrt{\prod_{\mu =1}^{d-1} n_\mu}}$,
\item[\upshape (c)]
$\displaystyle \frac{\| \bX \|_*}{\| \bX \|_\frob} = \sqrt{\prod_{\mu =1}^{d-1} n_\mu}$.
\end{itemize}
\end{theorem}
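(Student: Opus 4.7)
\emph{Proof plan.} The implications (a) $\Rightarrow$ (b) and (a) $\Rightarrow$ (c) are already contained in Proposition~\ref{prop: properties of orthogonal tensors}, so what remains is (b) $\Leftrightarrow$ (c) and (b) $\Rightarrow$ (a). Set $N = \prod_{\mu=1}^{d-1} n_\mu$. If (b) holds, then $\bX$ attains the trivial lower bound~\eqref{eq: trivial lower bound}, which forces $\App(\V) = 1/\sqrt{N}$; the duality formula~\eqref{eq: equivalence for spectral and nuclear}, which states that the extremal tensors for the two ratios coincide, then yields (c). The converse is symmetric: if (c) holds, $\bX$ attains the upper bound~\eqref{eq: trivial bound for nuclear norm}, and~\eqref{eq: equivalence for spectral and nuclear} gives (b). Thus the main content is (b) $\Rightarrow$ (a).

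I would prove (b) $\Rightarrow$ (a) by induction on $d$. The base case $d=1$ is trivial, since every nonzero vector is a scalar multiple of a unit vector, and $d=2$ is precisely the matrix SVD argument recalled in section~\ref{sec: matrices}. For the inductive step, assume the result in order $d-1$, and let $\bX \in \K^{n_1 \times \dots \times n_d}$ satisfy (b); after rescaling we may take $\|\bX\|_2 = 1$ and $\|\bX\|_\frob^2 = N$. For each unit vector $u^1 \in \K^{n_1}$, set $\bY_{u^1} := \bX \times_1 u^1 \in \K^{n_2 \times \dots \times n_d}$. Two ingredients control its norms: the formula $\|\bX\|_2 = \max_{\|u^1\|=1} \|\bX \times_1 u^1\|_2$ gives $\|\bY_{u^1}\|_2 \le 1$, and the trivial lower bound~\eqref{eq: trivial lower bound} applied to the order-$(d-1)$ tensor $\bY_{u^1}$ then gives $\|\bY_{u^1}\|_\frob \le \sqrt{N/n_1}$.

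The crucial step is an averaging argument. Completing $u^1$ to any orthonormal basis $u^1, u_2, \dots, u_{n_1}$ of $\K^{n_1}$, the Parseval-type identity $\sum_{i=1}^{n_1} \|\bX \times_1 u_i\|_\frob^2 = \|\bX\|_\frob^2 = N$ (immediate from the mode-$1$ matricization) together with each summand being at most $N/n_1$ forces equality throughout. Hence $\|\bY_{u^1}\|_\frob = \sqrt{N/n_1}$ for \emph{every} unit $u^1$, and the trivial lower bound applied to $\bY_{u^1}$ then pushes $\|\bY_{u^1}\|_2$ back up to $1$. Thus $\bY_{u^1}$ satisfies (b) in order $d-1$ with dimensions $n_2 \le \dots \le n_d$, and the inductive hypothesis identifies it as a scalar multiple of an orthogonal (unitary) tensor. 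Since Proposition~\ref{prop: properties of orthogonal tensors}(a) fixes the Frobenius norm of any such tensor to $\sqrt{N/n_1}$, the scalar has modulus one, so $\bY_{u^1}$ is itself orthogonal (unitary); by Definition~\ref{def:ot1}, $\bX$ is orthogonal (unitary), which is (a).

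The subtle point I would take care with is the pointwise, rather than merely basis-wise, control of $\|\bY_{u^1}\|_\frob$ and $\|\bY_{u^1}\|_2$: the upper bound $\sqrt{N/n_1}$ obtained from (b) and the trivial lower bound applied to $\bY_{u^1}$ has to be combined with Parseval applied to \emph{every} basis containing $u^1$ in order to rule out strict inequality for any single $u^1$. A minor check is that a modulus-one scalar multiple of an orthogonal (unitary) tensor is again orthogonal (unitary), which is immediate from the length-preserving characterization in Proposition~\ref{prop:length-preserving}.
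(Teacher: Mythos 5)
Your proposal is correct, and while it follows the paper's overall architecture (reduce everything to (b) $\Rightarrow$ (a), prove that by induction on $d$, and get (b) $\Leftrightarrow$ (c) from nuclear--spectral duality), the inductive step is executed by a genuinely different argument. The paper first uses the fiber decomposition to show that the \emph{standard} slices $\bX_{i_1} = \bX(i_1,\colon,\dots,\colon)$ have the extremal norms, concludes by induction that these slices are orthogonal, and then needs a separate argument to pass from standard slices to arbitrary contractions: it shows the vectors $v^{i_1} = \bX_{i_1}\times_2 u^2 \cdots \times_{d-1}u^{d-1}$ are pairwise orthogonal by deriving a contradiction from a $2\times n_d$ matrix of spectral norm exceeding one. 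Your Parseval argument --- summing $\|\bX\times_1 u_i\|_\frob^2$ over an arbitrary orthonormal basis containing a given unit vector $u^1$, and squeezing against the per-term upper bound $N/n_1$ coming from $\|\bX\times_1 u^1\|_2\le\|\bX\|_2$ and the trivial lower bound --- establishes the extremal norms for \emph{every} unit-vector contraction at once, so the recursive Definition~\ref{def:ot1} applies verbatim and the paper's auxiliary pairwise-orthogonality step is not needed. This buys a shorter and more uniform induction; the paper's route, in exchange, makes explicit the geometric fact that an orthogonal tensor maps the standard slices to an orthonormal system under any fixed contraction, which is reused in spirit elsewhere (e.g., in Theorem~\ref{th: nonexistence in complex case}). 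For (c) $\Rightarrow$ (b) you lean entirely on the duality statement~\eqref{eq: equivalence for spectral and nuclear}, which the paper explicitly sanctions before giving its more self-contained Cauchy--Schwarz argument via Proposition~\ref{prop: decomposition into normal form}; that is a legitimate shortcut. Your two flagged subtleties (pointwise versus basis-wise control, and that a unimodular multiple of an orthogonal tensor is orthogonal) are exactly the right ones, and both are handled correctly.
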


In light of the trivial lower bound~\eqref{eq: trivial lower bound} on the spectral norm, and the relation~\eqref{eq: equivalence for spectral and nuclear} with the nuclear norm, the immediate conclusion from this theorem is the following.

\begin{corollary}\label{cor: main corollary}
Let $n_1 \le \dots \le n_d$. Then
\[
\App_d(\K; n_1, \dots,n_d) = \frac{1}{\sqrt{\prod_{\mu =1}^{d-1} n_\mu}}
\]
if and only if orthogonal (resp. unitary) tensors exist in $\K^{n_1 \times \dots \times n_d}$. Otherwise, the value of $\App_d(\K; n_1, \dots,n_d)$ is strictly larger. Analogously, it holds that
\[
 \max_{\bX \neq 0} \frac{ \| \bX \|_*}{ \| \bX \|_\frob} = \sqrt{\prod_{\mu =1}^{d-1} n_\mu}
\]
in $\K^{n_1 \times \dots \times n_d}$ if and only if orthogonal (resp. unitary) tensors exist.
\end{corollary}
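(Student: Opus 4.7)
The plan is to handle the three implications separately, with the substance concentrated in (b) $\Rightarrow$ (a). Throughout, set $N \coloneqq \prod_{\mu=1}^{d-1} n_\mu$. The implications (a) $\Rightarrow$ (b) and (a) $\Rightarrow$ (c) are immediate from Proposition~\ref{prop: properties of orthogonal tensors}. For (b) $\Leftrightarrow$ (c), I would combine the duality inequality $\|\bX\|_\frob^2 \le \|\bX\|_2\|\bX\|_*$ from~\eqref{eq: duality estimate} with the trivial lower bound $\|\bX\|_2 \ge \|\bX\|_\frob/\sqrt{N}$ from~\eqref{eq: trivial lower bound} and the trivial upper bound $\|\bX\|_* \le \sqrt{N}\,\|\bX\|_\frob$ from~\eqref{eq: trivial bound for nuclear norm}. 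The sandwich $\|\bX\|_\frob^2 \le \|\bX\|_2 \|\bX\|_* \le \|\bX\|_2 \cdot \sqrt{N}\,\|\bX\|_\frob$ shows that equality in the lower bound on $\|\bX\|_2$ and equality in the upper bound on $\|\bX\|_*$ force each other, yielding (b) $\Leftrightarrow$ (c).

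The main step is (b) $\Rightarrow$ (a). By homogeneity, I rescale so that $\|\bX\|_\frob^2 = N$, turning (b) into $\|\bX\|_2 = 1$; it then suffices to prove that the rescaled $\bX$ is itself orthogonal (resp., unitary). The plan is to verify the length-preservation criterion of Proposition~\ref{prop:length-preserving} by a single Parseval expansion. Choose any orthonormal bases $\{u^\mu_{i_\mu}\}_{i_\mu=1}^{n_\mu}$ of $\K^{n_\mu}$ for $\mu=1,\dots,d$; the product basis $\{u^1_{i_1}\otimes\cdots\otimes u^d_{i_d}\}$ is an ONB of $\K^{n_1\times\cdots\times n_d}$. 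Applying Parseval twice, first in this product ONB and then in the ONB $\{u^d_{i_d}\}$ of $\K^{n_d}$, yields
\begin{equation*}
 N = \|\bX\|_\frob^2 = \sum_{i_1,\dots,i_d} \abs{\langle \bX, u^1_{i_1} \otimes \cdots \otimes u^d_{i_d} \rangle_\frob}^2 = \sum_{i_1,\dots,i_{d-1}} \|\omega_\bX(u^1_{i_1},\dots,u^{d-1}_{i_{d-1}})\|_2^2.
\end{equation*}
The last sum has exactly $N$ terms, each bounded above by $\|\bX\|_2^2 = 1$, so every term must equal $1$. Since any unit vector in $\K^{n_\mu}$ can be placed as $u^\mu_1$ inside some ONB, this delivers $\|\omega_\bX(u^1,\dots,u^{d-1})\|_2 = 1$ for all unit vectors, which is exactly the length-preservation property characterizing orthogonality (resp., unitarity) via Proposition~\ref{prop:length-preserving}.

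There is no real obstacle in this plan; the decisive observation is that the Parseval sum $\sum \|\omega_\bX(u^1_{i_1},\dots,u^{d-1}_{i_{d-1}})\|_2^2$ equals $N$ on the nose while being assembled from $N$ summands each at most $1$, leaving no slack. The freedom to choose the ONBs arbitrarily then promotes this basis-specific saturation into a pointwise identity valid for every tuple of unit vectors, which is precisely what Proposition~\ref{prop:length-preserving} requires.
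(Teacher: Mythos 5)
Your Parseval argument for the key implication (b) $\Rightarrow$ (a) is correct and genuinely different from the paper's proof: the paper proceeds by induction on $d$ (all fibers along mode $d$ have unit norm, the slices $\bX_{i_1}$ are orthogonal by the induction hypothesis, and pairwise orthogonality of the vectors $\bX_{i_1}\times_2 u^2\cdots\times_{d-1}u^{d-1}$ is then forced by a $2\times n_d$ spectral-norm contradiction), whereas your single expansion of $\|\bX\|_\frob^2$ over a product orthonormal basis, with each of the $N\coloneqq\prod_{\mu=1}^{d-1}n_\mu$ summands $\|\omega_\bX(u^1_{i_1},\dots,u^{d-1}_{i_{d-1}})\|_2^2$ capped at $\|\bX\|_2^2=1$, reaches the same conclusion in one stroke; the observation that any unit vector can be completed to an orthonormal basis then upgrades the basis-wise saturation to the pointwise identity required by Proposition~\ref{prop:length-preserving}. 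This part, together with (a) $\Rightarrow$ (b), (c) via Proposition~\ref{prop: properties of orthogonal tensors}, is sound and suffices for the spectral-norm half of the corollary.

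There is, however, a genuine gap in your claimed equivalence (b) $\Leftrightarrow$ (c). The sandwich $\|\bX\|_\frob^2\le\|\bX\|_2\|\bX\|_*\le\|\bX\|_2\cdot\sqrt{N}\,\|\bX\|_\frob$ only works in one direction: if (b) holds, the two ends of the chain coincide and both inequalities collapse to equalities, giving (c). But if (c) holds, you only learn that the \emph{second} inequality is an equality; the chain then reduces to $\|\bX\|_2\ge\|\bX\|_\frob/\sqrt{N}$, which is nothing beyond the trivial lower bound~\eqref{eq: trivial lower bound} and does not force $\|\bX\|_2=\|\bX\|_\frob/\sqrt{N}$. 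Since the nuclear-norm half of the corollary needs precisely ``some $\bX$ satisfies (c)'' $\Rightarrow$ ``orthogonal (unitary) tensors exist,'' i.e., (c) $\Rightarrow$ (a), this missing direction cannot be waved away. Two standard repairs: (i) invoke~\eqref{eq: equivalence for spectral and nuclear}, which states that the two extremal ratios are reciprocal and attained by the same tensors, so the nuclear-norm statement follows from the spectral-norm one already proved; or (ii) argue as the paper does, using Proposition~\ref{prop: decomposition into normal form} to decompose $\bX$ into $N$ mutually orthogonal elementary tensors with $\|\bZ_1\|_\frob=\|\bX\|_2$, bounding $\|\bX\|_*\le\sum_k\|\bZ_k\|_\frob\le\sqrt{N}\,\|\bX\|_\frob$, and noting that equality in the Cauchy--Schwarz step forces every $\|\bZ_k\|_\frob$ --- in particular $\|\bX\|_2$ --- to equal $\|\bX\|_\frob/\sqrt{N}$.
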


\begin{proof}[Proof of Theorem~\ref{th: main theorem for orthogonal tensors}]
In the proof, we use the notation $n_1 \cdots n_{d-1}$ instead of $\prod_{\mu=1}^{d-1} n_\mu$. By Proposition~\ref{prop: properties of orthogonal tensors}, (a) implies (b) and (c).

\smallskip

We show that (b) implies (a). The proof is by induction over $d$. For $d=1$ the spectral norm and Frobenius norm are equal. When $d=2$, we have already mentioned in section~\ref{sec: matrices} that for $m \le n$ only $m \times n$ matrices with pairwise orthonormal rows achieve $\| \bX \|_\frob = \sqrt{m}$ and $\| \bX \|_2 = 1$. Let now $d \ge 3$ and assume (b) always implies (a) for tensors of order $d-1$. Consider $\bX \in \K^{n_1 \times \dots \times n_d}$ with $\| \bX \|_\frob^2 = n_1 \cdots n_{d-1}$ and $\| \bX \|_2 = 1$. Then all the $n_1 \cdots n_{d-1}$ fibers $\bX(i_1,\dots,i_{d-1},\colon)$ parallel to the last dimension have Euclidean norm one, since otherwise one of these fibers has a larger norm, and so the corresponding rank-one tensor containing only that fiber (but normalized) provides a larger overlap with $\bX$ than one. As a consequence, the $n_1$ slices $\bX_{i_1} = \bX(i_1,\colon,\dots,\colon) \in \K^{n_2 \times \dots \times n_{d}}$, $i_1 = 1,\dots,n_1$, have squared Frobenius norm $n_2 \cdots n_d$ and spectral norm one (by~\eqref{eq: trivial lower bound}, $\| \bX_{i_1} \|_2 \ge 1$, whereas by~\eqref{eq: lower bound from slices}, $\| \bX_{i_1} \|_2 \le 1$). It now follows from the induction hypothesis and Proposition~\ref{prop: properties of orthogonal tensors} that all slices are orthogonal (resp., unitary) tensors.

Now let $u^1 \in \K^{n_1}$, \dots, $u^{d-1} \in \K^{n_{d-1}}$ have norm one. We have to show that
\begin{align*}
\omega_\bX(u^1,\dots,u^{d-1}) &= \bX \times_1 u^1 \times_2 u^2 \dots \times_{d-1} u^{d-1} \\
& = \sum_{i_1 = 1}^{n_1} u^1(i_1) \left( \bX_{i_1} \times_2 u^2 \dots \times_{d-1} u^{d-1} \right)
\end{align*}
has norm one.\footnote{The notation $\bX_{i_1} \times_2 u^2 \dots \times_{d-1} u^{d-1}$ is convenient although slightly abusive, since, e.g., $\times_2$ is strictly speaking a contraction in the first mode of $\bX_{i_1}$.} Since the $\bX_{i_1}$ are orthogonal (resp., unitary), the vectors $v^{i_1} = \bX_{i_1} \times_2 u^2 \dots \times_{d-1} u^{d-1}$ have norm one. It is enough to show that they are pairwise orthogonal in $\K^{n_d}$. Without loss of generality assume to the contrary that $\langle v^1, v^2 \rangle \neq 0$. Then the matrix $M\in\K^{2\times n_d}$ with rows $v^1$ and $v^2$
 has spectral norm larger than one. Hence there exist $\tilde{u} \in \K^2$ and $u^d \in \K^{n_d}$, both of norm one, such that for $u^1 = (\tilde u (1), \tilde u(2),0,\dots,0) \in \K^{n_1}$ it holds that
\[
\abs{\langle \bX, u^1 \otimes u^2 \otimes \dots \otimes u^{d-1} \otimes u^d \rangle_\frob} = \abs{\tilde u(1) \langle v^1, u^d \rangle + \tilde u(2) \langle v^2, u^d \rangle} = \abs{{\tilde u}^T M u^d} > 1.
\]
This contradicts $\| \bX \|_2 = 1$.

\smallskip

We prove that (c) implies (b). Strictly speaking, this follows from~\cite[Thm.~2.2]{DerksenFriedlandLimWang2017}, which states that $\| \bX \|_* / \| \bX \|_\frob = (\App(\V))^{-1}$ if and only if $\| \bX \|_2 / \| \bX \|_\frob = \App(\V)$, and (c) implies the first of these properties (by~\eqref{eq: trivial bound for nuclear norm} and~\eqref{eq: equivalence for spectral and nuclear}). The following more direct proof is still insightful.

If (c) holds, we can assume that
\begin{equation}\label{scaling assumption}
\| \bX \|_* = n_1 \cdots n_{d-1} = \| \bX \|_\frob^2.
\end{equation}
By Proposition~\ref{prop: decomposition into normal form}, we can find a decomposition $\bX = \sum_{k=1}^{n_1 \cdots n_{d-1}} \bZ_k$ into $n_1 \cdots n_{d-1}$ mutually orthogonal elementary tensors $\bZ_k \in \mC_1$ such that $\| \bZ_1 \|_\frob = \| \bX \|_2$.  Using the definition~\eqref{eq: definition of nuclear norm} of nuclear norm, the Cauchy--Schwarz inequality, and~\eqref{scaling assumption} we obtain
\[
\| \bX \|_* \le \sum_{k=1}^{n_1 \cdots n_{d-1}} \| \bZ_k \|_\frob \le \sqrt{n_1 \cdots n_{d-1}} \| \bX \|_\frob = \| \bX \|_*.
\]
Hence the inequality signs are actually equalities. However, equality in the Cauchy--Schwarz inequality is attained only if all $\| \bZ_k \|_\frob$'s take the same value, namely,
\[
 \| \bZ_k \|_\frob = \frac{ \| \bX \|_\frob}{\sqrt{n_1 \cdots n_{d-1}}} = 1.
\]
In particular, $\| \bZ_1 \|_\frob = \| \bX \|_2$ has this value, which shows (b).
\end{proof}

\begin{remark}\label{rem:nonuniqueness}
We note for completeness that by Proposition~\ref{prop: properties of orthogonal tensors} an orthogonal (resp., unitary) tensor has infinitely many best rank-one approximations and they are very easy to construct. In fact, given {\em any} unit vectors $u^\mu \in \K^{n_\mu}$ for $\mu=1,\dots,d-1$, let $u^d = \bX \times_1 u^1 \dots \times_{d-1} u^{d-1}$, which is also a unit vector. Then
\[
\abs{\langle \bX, u^1 \otimes \dots \otimes u^d \rangle_\frob} = \| u^d \|_2^2 = 1 = \| \bX \|_2,
\]
which, by Proposition~\ref{prop}, shows that $u^1 \otimes \dots \otimes u^d$ is a best rank-one approximation of $\bX$.
\end{remark}

\section{Existence of orthogonal and unitary tensors}\label{sec: existence}

\subsection{Third-order tensors}

For a third-order tensor $\bX \in \K^{\ell \times m \times n}$ with $\ell \le m \le n$, the lower bound~\eqref{eq: trivial lower bound} takes the form
\begin{equation}\label{eq: lower bound for third order}
\frac{\| \bX \|_2}{\| \bX \|_\frob} \ge \frac{1}{\sqrt{\ell m}}.
\end{equation}
By Theorem~\ref{th: main theorem for orthogonal tensors}, equality can be achieved only for orthogonal (resp., unitary) tensors. From Proposition~\ref{prop: sharpness for tall tensors} we know that this estimate is sharp in the case $\ell m \le n$. In fact, an orthogonal tensor can then be easily constructed via its slices
\[
\bX(i,\colon,\colon) = [
\underbrace{
  \begin{matrix}
  O  & \cdots & O
  \end{matrix}}_{i-1}
\begin{matrix}
& Q_i & O & \cdots & O
\end{matrix}
 ] \in  \K^{m \times n}, \quad i=1,\dots,\ell,
\]
where the entries represent blocks of size $m \times m$ (except the last block might have fewer or even no columns), and the $Q_i\in\K^{m\times m}$ are matrices with pairwise orthonormal rows at position $i$.

In this section we inspect the sharpness in the case $\ell m > n$, where such a construction is not possible in general. Interestingly, the results depend on the underlying field.

\subsubsection{Real case: Relation to Hurwitz problem}\label{sec: Hurwitz problem}

By Proposition~\ref{prop:length-preserving}, a third-order tensor $\bX \in \K^{\ell \times m \times n}$ is orthogonal if and only if the bilinear form $\omega_\bX(u,v) = \bX \times_1 u \times_2 v$ satisfies
\begin{equation}\label{eq: length preserving}
\| \omega_\bX(u,v) \|_2 = \| u \|_2 \| v \|_2
\end{equation}
for all $u \in \R^\ell$, $v \in \R^m$. In the real case $\K = \R$, this relation can be written as
\begin{equation}\label{eq:quadratesatze}
\sum_{k = 1}^n \omega_k(u,v)^2 =  \left( \sum_{i = 1}^\ell u_i^2 \right) \left( \sum_{j=1}^m v_j^2 \right).
\end{equation}
The question of whether for a given triple $[\ell,m,n]$ of dimensions a bilinear form $\omega(u,v)$ exists obeying this relation is known as the Hurwitz problem (here for the field $\R$). If a solution exists, the triple $[\ell,m,n]$ is called \emph{admissible} for the Hurwitz problem. \rr{Since, on the other hand, the correspondence $\bX \mapsto \omega_\bX$ is a bijection\footnote{The inverse is given through $\bX(i,j,k) = \omega_k(e_i,e_j)$ with standard unit vectors $e_i$, $e_j$.} between $\R^{\ell \times m \times n}$ and the space of bilinear forms $\R^\ell \times \R^m \to \R^n$, every solution to the Hurwitz problem yields an orthogonal tensor.} \rr{For real third-order tensors, Theorem~\ref{th: main theorem for orthogonal tensors} can hence be stated as follows.}

\begin{theorem}\label{th: admissibility}
Let $\ell \le m \le n$. A tensor $\bX \in \R^{\ell \times m \times n}$ is orthogonal if and only if the induced bilinear form $\omega_\bX$ is a solution to the Hurwitz problem~\eqref{eq:quadratesatze}. Correspondingly, it holds that
\[
\App_3(\R;\ell,m,n) = \frac{1}{\sqrt{\ell m}}
\]
if and only if $[\ell,m,n]$ is an admissible triple for the Hurwitz problem.
\end{theorem}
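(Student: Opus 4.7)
The plan is to observe that this theorem is essentially a direct translation of the previously established results into the language of the Hurwitz problem, so no substantially new work is required; the task is just to trace through definitions carefully and invoke Corollary~\ref{cor: main corollary}.

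First I would handle the equivalence between orthogonality of $\bX$ and the Hurwitz condition on $\omega_\bX$. By Proposition~\ref{prop:length-preserving} specialized to $d=3$, a tensor $\bX \in \R^{\ell \times m \times n}$ is orthogonal if and only if $\|\omega_\bX(u,v)\|_2 = \|u\|_2 \|v\|_2$ for all $u \in \R^\ell$ and $v \in \R^m$. Squaring both sides and writing out the Euclidean norms in coordinates yields precisely~\eqref{eq:quadratesatze} with $\omega_k = (\omega_\bX)_k$ being the $k$-th coordinate of $\omega_\bX$. Conversely, every solution $\omega$ of~\eqref{eq:quadratesatze} is a bilinear form $\R^\ell \times \R^m \to \R^n$ (this is part of the formulation of the Hurwitz problem), and via the bijection $\bX \leftrightarrow \omega_\bX$ noted in the footnote preceding the theorem (with inverse $\bX(i,j,k) = \omega_k(e_i,e_j)$), each such $\omega$ comes from a unique tensor $\bX$ that is then orthogonal. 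This proves the first assertion.

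For the second assertion, I would simply combine the first with Corollary~\ref{cor: main corollary}. By that corollary (with $d=3$ and $n_1=\ell, n_2=m, n_3=n$), one has $\App_3(\R;\ell,m,n) = 1/\sqrt{\ell m}$ if and only if an orthogonal tensor exists in $\R^{\ell \times m \times n}$. By the first part, such an orthogonal tensor exists if and only if there exists a bilinear form $\omega\colon \R^\ell \times \R^m \to \R^n$ satisfying~\eqref{eq:quadratesatze}, which is exactly the statement that $[\ell,m,n]$ is admissible for the Hurwitz problem.

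There is no real obstacle here; the content is packaged into Proposition~\ref{prop:length-preserving}, the bijection $\bX \mapsto \omega_\bX$, and Corollary~\ref{cor: main corollary}. The only point to write carefully is the explicit identification between the length-preserving condition of Proposition~\ref{prop:length-preserving} and the sums-of-squares formula~\eqref{eq:quadratesatze}, since this is what justifies calling the Hurwitz problem into play, and the verification that $\bX \mapsto \omega_\bX$ is surjective onto all bilinear forms (so that \emph{any} Hurwitz solution yields an orthogonal tensor, not only some). Both are essentially immediate from the definitions.
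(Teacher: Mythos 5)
Your proposal is correct and follows essentially the same route as the paper: the paper also derives the first equivalence by specializing Proposition~\ref{prop:length-preserving} to $d=3$, rewriting the length-preserving condition as~\eqref{eq:quadratesatze}, and invoking the bijection $\bX \mapsto \omega_\bX$ (with inverse $\bX(i,j,k)=\omega_k(e_i,e_j)$), then obtains the statement about $\App_3(\R;\ell,m,n)$ from Theorem~\ref{th: main theorem for orthogonal tensors} / Corollary~\ref{cor: main corollary}. No gaps.
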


Some admissible cases (besides $\ell m \le n$) known from the literature are discussed next.

\subsubsection*{$n\times n \times n$ tensors and composition algebras}

In the classical work~\cite{Hurwitz1898}, Hurwitz considered the case $\ell = m = n$. In this case the bilinear form $w_\bX$ turns $\R^n$ into an algebra on $\R^n$. In modern terminology, an algebra on $\R^n$ satisfying the relation~\eqref{eq:quadratesatze} for its product $u \cdot v = \omega$ is called a composition algebra. Hurwitz disproved the existence of such an algebra for the cases $n \neq 1,2,4,8$.\footnote{In fact, when $\bX$ is orthogonal, $\omega_\bX$ turns $\R^n$ into a division algebra. By a much deeper result, these algebras also only exist for $n = 1,2,4,8$.}

For the cases $n=1,2,4,8$, the real field $\R$, the complex field $\C$, the quaternion algebra $\mathbb{H}$, and the octonion algebra $\mathbb{O}$ are composition algebras on $\R^n$, respectively, since the corresponding multiplications are length preserving. Consequently, examples for orthogonal $n \times n \times n$ tensors are given by the multiplication tensors of these algebras. For completeness we list them here.

For $n = 1$ this is just $\bX = 1$. For $n = 2$, let $e_1,e_2$ denote the standard unit vectors in $\R^2$, i.e., $[e_1\ e_2]=I_2$. Then
\begin{equation}\label{eq:complex mult}
\bX_{\C} = \begin{bmatrix*}[r] e_1 & e_2 \\ e_2 & - e_1 \end{bmatrix*} \in \R^{2 \times 2 \times 2}
\end{equation}
is orthogonal. This is the tensor of multiplication in $\C \cong \R^2$. \rr{Here (and in the following), the matrix notation with vector-valued entries means that $\bX_{\C}$ has the fibers $\bX_{\C}(1,1,:) = e_1$, $\bX_{\C}(1,2,:) = e_2$, $\bX_{\C}(2,1,:) = e_2$, and $\bX_{\C}(2,2,:) = -e_1$ along the third mode.}

For $n=4$, let $e_1,e_2,e_3,e_4$ denote the standard unit vectors in $\R^4$; then
\begin{equation}\label{eq:quaternion mult}
\bX_{\mathbb{H}} = \begin{bmatrix*}[r]
e_1 & e_2 & e_3 & e_4 \\
e_2 & -e_1 & e_4 & -e_3 \\
e_3 & -e_4 & -e_1 & e_2 \\
e_4 & e_3 & -e_2 & -e_1
\end{bmatrix*} \in \R^{4 \times 4 \times 4}
\end{equation}
is orthogonal. This is the tensor of multiplication in the quaternion algebra $\mathbb{H} \cong \R^4$.

For $n = 8$, let $e_1,\dots,e_8$ denote the standard unit vectors in $\R^8$; then
\begin{equation}\label{eq:octonion mult}
\bX_{\mathbb{O}} = \begin{bmatrix*}[r]
e_1 & e_2 & e_3 & e_4 & e_5 & e_6 & e_7 & e_8 \\
e_2 & -e_1 & e_4 & -e_3 & e_6 & -e_5 & -e_8 & e_7 \\
e_3 & -e_4 & -e_1 & e_2 & e_7 & e_8 & -e_5 & -e_6 \\
e_4 & e_3 & -e_2 & -e_1 & e_8 & -e_7 & e_6 & -e_5 \\
e_5 & -e_6 & -e_7 & -e_8 & -e_1 & e_2 & e_3 & e_4 \\
e_6 & e_5 & -e_8 & e_7 & -e_2 & -e_1 & -e_4 & e_3 \\
e_7 & e_8 & e_5 & -e_6 & -e_3 & e_4 & -e_1 & -e_2 \\
e_8 & -e_7 & e_6 & e_5 & -e_4 & -e_3 & e_2 & -e_1
\end{bmatrix*} \in \R^{8 \times 8 \times 8}
\end{equation}
is orthogonal. This is the tensor of multiplication in the octonion algebra $\mathbb{O} \cong \R^8$.

For reference we summarize the $n \times n \times n$ case.

\begin{theorem}\label{th: Hurwitz result}
Real orthogonal $n \times n \times n$ tensors exist only for $n = 1,2,4,8$.
Consequently,
\[
\App_3(\R;n,n,n) = \frac{1}{n}
\]
if and only if $n = 1,2,4,8$. Otherwise, the value of $\App_3(\R;n,n,n)$ must be strictly larger.
\end{theorem}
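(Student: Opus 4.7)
The plan is to reduce the statement to the classical Hurwitz theorem on composition algebras via Theorem~\ref{th: admissibility}, and then read off the consequence for $\App_3$ from Corollary~\ref{cor: main corollary}. By Theorem~\ref{th: admissibility}, a real orthogonal $n \times n \times n$ tensor exists if and only if the triple $[n,n,n]$ is admissible for the Hurwitz problem, i.e., if and only if there is a bilinear form $\omega \colon \R^n \times \R^n \to \R^n$ satisfying the length-preserving identity~\eqref{eq:quadratesatze}. Such an $\omega$ equips $\R^n$ with a multiplication having a multiplicative Euclidean norm, making it (up to isotopy) a real composition algebra.

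For the existence direction, the easy half, I would simply exhibit the four explicit examples. The cases $n = 1, 2, 4, 8$ are covered by the trivial tensor $\bX = 1$ and the multiplication tensors $\bX_{\C}$, $\bX_{\mathbb{H}}$, $\bX_{\mathbb{O}}$ displayed in~\eqref{eq:complex mult}--\eqref{eq:octonion mult}. That each of these satisfies~\eqref{eq:quadratesatze} is a direct verification encoding the classical two-, four-, and eight-squares identities of Diophantus, Euler, and Degen, respectively.

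For the nonexistence direction when $n \notin \{1,2,4,8\}$, I would appeal to Hurwitz's theorem~\cite{Hurwitz1898}. The core mechanism, which is the main obstacle, goes as follows. Writing $\omega(u,v) = \sum_{i=1}^n u_i A_i v$ with matrices $A_i \in \R^{n \times n}$, condition~\eqref{eq:quadratesatze} becomes the matrix system
\[
A_i^T A_i = I_n, \qquad A_i^T A_j + A_j^T A_i = 0 \quad (i \neq j).
\]
Premultiplying each $A_i$ by $A_1^T$ so that the first matrix becomes the identity, one obtains $n-1$ matrices $B_2,\dots,B_n \in \R^{n \times n}$ satisfying $B_i^T = -B_i$, $B_i^2 = -I_n$, and $B_i B_j + B_j B_i = 0$ for $i \neq j$. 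Such an anti-commuting family of skew-involutions generates a Clifford algebra representation on $\R^n$, and the maximal number of such matrices in $\R^{n \times n}$ is governed by the Hurwitz--Radon function $\rho(n)$. The requirement that $n-1$ such matrices exist forces $\rho(n) \ge n$, which holds exactly when $n \in \{1,2,4,8\}$.

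Finally, the consequence for $\App_3(\R;n,n,n)$ is immediate from Corollary~\ref{cor: main corollary}: the naive lower bound $1/n$ from~\eqref{eq: trivial lower bound} is attained if and only if a real orthogonal $n \times n \times n$ tensor exists. Combining this with the dichotomy above yields equality precisely for $n \in \{1,2,4,8\}$ and strict inequality otherwise.
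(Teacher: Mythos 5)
Your proposal is correct and follows essentially the same route as the paper: reduce via Theorem~\ref{th: admissibility} to the admissibility of $[n,n,n]$ for the Hurwitz problem, exhibit the multiplication tensors \eqref{eq:complex mult}--\eqref{eq:octonion mult} for $n=1,2,4,8$, cite Hurwitz~\cite{Hurwitz1898} for nonexistence otherwise, and read off the statement about $\App_3$ from Corollary~\ref{cor: main corollary}. The only difference is that you additionally sketch (correctly) the anticommuting-matrix mechanism behind Hurwitz's theorem via the Hurwitz--Radon function, whereas the paper treats that classical nonexistence result purely as a citation.
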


\subsubsection*{Other admissible triples}

There exists an impressive body of work for identifying admissible triples for the Hurwitz problem. The problem can be considered as open in general. We list some of the available results here.  We refer to~\cite{Shapiro2000} for an introduction into the subject and to~\cite{Lenzhenetal2011} for recent results and references.

Regarding triples $[\ell,m,n]$ with $\ell \le m \le n$ we can observe that if a configuration is admissible, then so is $[\ell',m',n']$ with $\ell' \le \ell$, $m' \le m$, and $n' \ge n$. This follows directly from~\eqref{eq:quadratesatze}, since we can consider subvectors of $u$ and $v$ and artificially expand the left sum with $\omega_k = 0$. As stated previously, $n \ge \ell m$ is always admissible. Let
\[
 \ell * m \coloneqq \min\{n \colon \text{$[\ell,m,n]$ is admissible}  \},
\]
i.e., the minimal $n$ for~\eqref{eq:quadratesatze} to exist. For $\ell \le 9$ these values can be recursively computed for all $m \ge \ell$ according to the rule~\cite[Prop.~12.9 and 12.13]{Shapiro2000}:
\[
 \ell * m = \begin{cases} 2(\lceil \frac{\ell}{2} \rceil * \lceil \frac{m}{2} \rceil) - 1 &\quad \!\!\!\! \text{if $\ell$, $m$ are both odd and $\lceil \frac{\ell}{2} \rceil * \lceil \frac{m}{2} \rceil = \lceil \frac{\ell}{2} \rceil + \lceil \frac{m}{2} \rceil - 1$,}\\
  2(\lceil \frac{\ell}{2} \rceil * \lceil \frac{m}{2} \rceil) &\quad \!\!\!\! \text{else.}
 \end{cases}
\]
This provides the following table~\cite{Shapiro2000}:
\begin{equation*}\label{table for 1-9}
\begin{tabular}{c|ccccccccccccccccc}
$\ell \setminus m$ & 1 & 2 & 3 & 4 & 5 & 6 & 7 & 8 & 9 & 10 & 11 & 12 & 13 & 14 & 15 & 16 \\\hline
1 & 1 & 2 & 3 & 4 & 5 & 6 & 7 & 8 & 9 & 10 & 11 & 12 & 13 & 14 & 15 & 16 \\
2 &   & 2 & 4 & 4 & 6 & 6 & 8 & 8 & 10 & 10 & 12 & 12 & 14 & 14 & 16 & 16 \\
3 &   &   & 4 & 4 & 7 & 8 & 8 & 8 & 11 & 12 & 12 & 12 & 15 & 16 & 16 & 16 \\
4 &   &   &   & 4 & 8 & 8 & 8 & 8 & 12 & 12 & 12 & 12 & 16 & 16 & 16 & 16 \\
5 &   &   &   &   & 8 & 8 & 8 & 8 & 13 & 14 & 15 & 16 & 16 & 16 & 16 & 16 \\
6 &   &   &   &   &   & 8 & 8 & 8 & 14 & 14 & 16 & 16 & 16 & 16 & 16 & 16 \\
7 &   &   &   &   &   &   & 8 & 8 & 15 & 16 & 16 & 16 & 16 & 16 & 16 & 16 \\
8 &   &   &   &   &   &   &   & 8 & 16 & 16 & 16 & 16 & 16 & 16 & 16 & 16 \\
9 &   &   &   &   &   &   &   &   & 16 & 16 & 16 & 16 & 16 & 16 & 16 & 16
\end{tabular}
\end{equation*}
For $10 \le \ell \le 16$, the following table due to Yiu~\cite{Yiu1994} provides upper bounds for $\ell * m$ (in particular it yields admissible triples):
\begin{equation}\label{table for 10-16}
\begin{tabular}{c|ccccccc}
$\ell \setminus m$ & 10 & 11 & 12 & 13 & 14 & 15 & 16 \\\hline
10 & 16 & 26 & 26 & 27 & 27 & 28 & 28 \\
11 &    & 26 & 26 & 28 & 28 & 30 & 30 \\
12 &    &    & 26 & 28 & 30 & 32 & 32 \\
13 &    &    &    & 28 & 32 & 32 & 32 \\
14 &    &    &    &    & 32 & 32 & 32 \\
15 &    &    &    &    &    & 32 & 32 \\
16 &    &    &    &    &    &    & 32
\end{tabular}
\end{equation}

\rr{The admissible triples in these tables are obtained by rather intricate combinatorial constructions of solutions $\omega = \omega_\bX$ to the Hurwitz problem~\eqref{eq:quadratesatze}, whose tensor representations $\bX$ have integer entries (integer composition formulas); see~\cite[p.~269~ff.]{Shapiro2000} for details. From the abstract construction, it is not easy to directly write down the corresponding orthogonal tensors, although in principle it is possible. For the values in the table~\eqref{table for 10-16} it is not known whether they are smallest possible if one admits real entries in $\bX$ as we do here (although this is conjectured~\cite[p.~314]{Shapiro2000}). Some further upper bounds for $\ell * m$ based on integer composition formulas for larger values of $\ell$ and $m$ are listed in~\cite[p.~291~ff.]{Shapiro2000}.}

There are also nontrivial infinite families of admissible triples known. Radon~\cite{Radon1922} and Hurwitz~\cite{Hurwitz1922} independently determined the largest $\ell \le n$ for which the triple $[\ell,n,n]$ is admissible: writing $n = 2^{4\alpha + \beta} \gamma$ with $\beta \in \{0,1,2,3\}$ and $\gamma$ odd, the maximal admissible value of $\ell$ is
\begin{equation}\label{eq: Radon-Hurwitz}
\ell_{\text{max}} = 2^\beta + 8 \alpha.
\end{equation}
\rr{If $n \ge 2$ is even, then $\ell_{\text{max}} \ge 2$, and so
 \[
  \App_3(\R;\ell,n,n) = \frac{1}{\sqrt{\ell n}} \quad \text{for $n$ even and $1 \le \ell \le \ell_{\text{max}}$.}
 \]
In particular, we recover~\eqref{eq: kongetal} as a special case. On the other hand, when $n$ is odd, then $\alpha = \beta = 0$ and so $\ell_{\text{max}} = 1$. Hence $[\ell,n,n]$ is not admissible for $\ell \ge 2$ and $\App_3(\R;\ell,n,n) > 1/\sqrt{\ell n}$, in line, e.g., with~\eqref{eq: kongetal results}.
}

Some known families of admissible triples ``close'' to Hurwitz--Radon triples are
\[
\left[2 + 8\alpha, 2^{4\alpha} - \binom{4\alpha}{2\alpha},2^{4\alpha} \right]
\quad
\text{and}
\quad
[2\alpha, 2^\alpha - 2\alpha, 2^\alpha - 2],
\quad \text{$\alpha \in \mathbb{N}$.}
\]
We refer once again to~\cite{Lenzhenetal2011} for more results of this type.

\subsubsection{Complex case}

In the complex case, the answer to the existence of unitary tensors in the case $\ell m > n$ is very simple: they do not exist. For example, for complex $2 \times 2 \times 2$ tensors this is illustrated by the fact that $\App_3(\C;2,2,2) = 2/3$; see~\cite{DerksenFriedlandLimWang2017}.

\begin{theorem}\label{th: nonexistence in complex case}
Let $\ell \le m \le n$. When $\ell m > n$, there exists no unitary tensor in $\C^{\ell \times m \times n}$, and hence
\[
\App_3(\C;\ell,m,n) > \frac{1}{\sqrt{\ell m}}.
\]
\end{theorem}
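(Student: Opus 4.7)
The plan is to exploit the $\C$-bilinearity of the induced form $\omega_{\bX}$, together with the length-preserving property of Proposition~\ref{prop:length-preserving}, to force the mode-three fibers of $\bX$ to form an orthonormal system of cardinality $\ell m$ inside $\C^{n}$, which is impossible when $\ell m>n$. Assume for contradiction that $\bX\in\C^{\ell\times m\times n}$ is unitary. Let $\{e_i\}$ and $\{f_j\}$ denote the standard bases of $\C^{\ell}$ and $\C^{m}$, and set $B_{ij}:=\omega_{\bX}(e_i,f_j)=\bX(i,j,\colon)\in\C^{n}$ for $1\le i\le\ell$, $1\le j\le m$.

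First I would expand the length-preserving identity $\|\omega_{\bX}(u,v)\|_2^{2}=\|u\|_2^{2}\|v\|_2^{2}$ using $\omega_{\bX}(u,v)=\sum_{i,j}u_i v_j B_{ij}$. Since the Frobenius/Hermitian inner product on $\C^{n}$ is conjugate-linear in the first slot, this produces the identity
\[
 \sum_{i,j,i',j'}\overline{u_i}\,\overline{v_j}\,u_{i'}v_{j'}\,\langle B_{ij},B_{i'j'}\rangle \;=\; \sum_{i,j,i',j'}\overline{u_i}\,\overline{v_j}\,u_{i'}v_{j'}\,\delta_{ii'}\delta_{jj'},
\]
valid for all $u\in\C^{\ell}$ and $v\in\C^{m}$.

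Second, I would compare coefficients of the monomials $\overline{u_i}\,\overline{v_j}\,u_{i'}v_{j'}$ to deduce $\langle B_{ij},B_{i'j'}\rangle=\delta_{ii'}\delta_{jj'}$ for every quadruple of indices. The cleanest justification is that these monomials are linearly independent as complex-valued functions on $\C^{\ell}\times\C^{m}$ (equivalently, $u,\bar u,v,\bar v$ are Wirtinger-independent); elementarily, one may test with $u=e_i+\alpha e_{i'}$, $v=e_j+\beta e_{j'}$ and vary $\alpha,\beta\in\C$ to isolate each coefficient. This is precisely the step where the complex case diverges sharply from the real one: over $\R$ the same identity is only quadratic in the real variables $u_i,v_j$, and one merely recovers the symmetrized relation $\langle B_{ij},B_{i'j'}\rangle+\langle B_{ij'},B_{i'j}\rangle=2\delta_{ii'}\delta_{jj'}$, which admits the rich family of solutions classified by the Hurwitz problem in Section~\ref{sec: Hurwitz problem}.

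Finally, the relations $\langle B_{ij},B_{i'j'}\rangle=\delta_{ii'}\delta_{jj'}$ say that $\{B_{ij}\}$ is an orthonormal system of $\ell m$ vectors in $\C^{n}$, so $\ell m\le n$, contradicting the hypothesis $\ell m>n$. The strict inequality $\App_3(\C;\ell,m,n)>1/\sqrt{\ell m}$ then follows from Corollary~\ref{cor: main corollary}. I expect the only delicate point to be the coefficient-extraction step, and since the Wirtinger variables separate over $\C$, that is routine rather than genuinely hard; the substantive content of the proof is the conceptual observation that complex sesquilinear bookkeeping doubles the number of independent variables and thereby kills the degrees of freedom that make the real Hurwitz problem interesting.
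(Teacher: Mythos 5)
Your proof is correct and is essentially the paper's argument in a different notation: the paper tests the unitarity of $\sum_i u(i)\bX(i,\colon,\colon)$ with $u$ supported on two coordinates with coefficients $(1,1)$ and $(1,\mathrm{i})$ to derive $X_j^{}X_i^H=0$ for $i\neq j$, which is exactly your coefficient-extraction step (the Wirtinger independence of $u$ and $\bar u$) specialized to those test vectors, and both arguments conclude by fitting $\ell m$ pairwise orthonormal directions into $\C^n$. Your observation that over $\R$ one only recovers the symmetrized relation, which is why the real case leads to the Hurwitz problem rather than an immediate contradiction, accurately captures the distinction the paper draws between Theorems~\ref{th: admissibility} and~\ref{th: nonexistence in complex case}.
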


\begin{proof}
Suppose to the contrary that some $\bX \in \C^{\ell \times m \times n}$ is unitary. Let $X_i = \bX(i,\colon,\colon) \in \C^{m \times n}$ denote the slices of $\bX$ perpendicular to the first mode. By definition,
\(
\sum_{i=1}^\ell u(i) X_i
\)
is unitary (has pairwise orthonormal rows) for all unit vectors $u \in \C^\ell$. In particular, every $X_i$ is unitary. For $i \neq j$ we then find that
$X_i+X_j$ is $\sqrt{2}$ times a unitary matrix, so
\[
2I_m =
(X_i + X_j)(X_i + X_j)^H = 2I_m + X_i^{}X_j^H + X_j^{}X_i^H,
\]
that is,
$X_j^{} X_i^H  +  X_i^{}X_j^H = 0$. But also we see that
$X_i+\mathrm{i}X_j$ is also $\sqrt{2}$ times a unitary matrix, so
\[
2I_m = (X_i + \mathrm{i}X_j)(X_i + \mathrm{i} X_j)^H = (X_i + \mathrm{i} X_j)(X_i^H -\mathrm{i}X_j^H)=  2I_m + \mathrm{i}( X_j^{}X_i^H -  X_i^{}X_j^H),
\]
that is, $ X_j^{}X_i^H - X_i^{}X_j^H  = 0$. We conclude that $X_jX_i^H  = 0$ for all $i \neq j$. This would mean that the $\ell$ row spaces of the matrices $X_i$ are pairwise orthogonal subspaces in $\C^n$, but each of dimension $m$. Since $\ell m > n$, this is not possible.
\end{proof}

The above result appears surprising in comparison to the real case. In particular, it admits the following remarkable corollary on a slight variation of the Hurwitz problem. The statement has a classical feel, but since we have been unable to find it in the literature, we emphasize it here. As a matter of fact, our proof of nonexistence of unitary tensors as conducted above resembles the main logic of contradiction in Hurwitz's original proof~\cite{Hurwitz1898}, but under stronger assumptions that rule out all dimensions $n>1$.  The subtle difference to Hurwitz's setup is that the function $u \mapsto \| u \|_2^2$ is not a quadratic form on $\C^n$ over the field $\C$ (it is not $\C$-homogeneous) but is generated by a sesquilinear form.

\begin{corollary}
If $n > 1$, then there exists no bilinear map $\omega \colon \C^n \times \C^n \to \C^n$ such that
\[
\| \omega(u,v) \|_2 = \| u \|_2 \| v \|_2
\]
for all $u,v \in \C^n$.
\end{corollary}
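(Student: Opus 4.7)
The plan is to reduce the corollary directly to Theorem~\ref{th: nonexistence in complex case} via the natural bijection between bilinear maps and third-order tensors, which is the same correspondence used for the real case in Theorem~\ref{th: admissibility}.

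First, I would observe that any $\C$-bilinear map $\omega \colon \C^n \times \C^n \to \C^n$ is uniquely determined by its values on pairs of standard basis vectors $(e_i, e_j)$, and hence comes from a unique tensor $\bX \in \C^{n \times n \times n}$ via the formula $\omega(u,v) = \omega_\bX(u,v) = \bX \times_1 u \times_2 v$, where the fibers of $\bX$ are $\bX(i,j,\colon) = \omega(e_i, e_j)$. This extends the bijection mentioned in the footnote of section~\ref{sec: Hurwitz problem} to the complex field verbatim, since bilinearity over $\C$ is an entirely algebraic notion and the formula $\bX \times_1 u \times_2 v$ is $\C$-bilinear in $(u,v)$.

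Next, I would invoke Proposition~\ref{prop:length-preserving} in the case $d = 3$ and $n_1 = n_2 = n_3 = n$: the identity $\| \omega_\bX(u,v) \|_2 = \| u \|_2 \| v \|_2$ for all $u, v \in \C^n$ is exactly equivalent to $\bX$ being a unitary tensor in $\C^{n \times n \times n}$. Hence existence of an $\omega$ as in the corollary is equivalent to existence of a unitary tensor in $\C^{n \times n \times n}$.

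Finally, I would apply Theorem~\ref{th: nonexistence in complex case} with $\ell = m = n$: whenever $n > 1$ we have $\ell m = n^2 > n$, so no unitary tensor exists in $\C^{n \times n \times n}$, and consequently no such bilinear map $\omega$ exists. No step in this argument is really an obstacle; the corollary is essentially a translation of Theorem~\ref{th: nonexistence in complex case}. The conceptual content lies entirely in the theorem's proof, which, as the paragraph preceding the corollary emphasizes, must exploit the sesquilinearity of the Hermitian norm rather than its being a $\C$-quadratic form, and so genuinely goes beyond the scope of Hurwitz's original argument in the real case.
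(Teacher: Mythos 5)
Your argument is exactly the paper's proof: identify the bilinear map with a tensor $\bX \in \C^{n\times n\times n}$, use Proposition~\ref{prop:length-preserving} to translate the length-preserving identity into unitarity of $\bX$, and apply Theorem~\ref{th: nonexistence in complex case} with $\ell = m = n$ and $n^2 > n$. The proposal is correct and matches the paper's reasoning step for step.
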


\begin{proof}
Since bilinear forms from $\C^n \times \C^n$ to $\C^n$ are in one-to-one correspondence to complex $n \times n \times n$ tensors via~\eqref{eq:induced d-1 form}, the assertion follows from Theorem~\ref{th: nonexistence in complex case} due to Proposition~\ref{prop:length-preserving}.
\end{proof}

We emphasize again that while unitary tensors do not exist when $\ell m > n$, they do exist when $\ell m\leq n$, by Proposition~\ref{prop: sharpness for tall tensors}.

\subsection{Implications to tensor spaces of order larger than three}

Obviously, it follows from the recursive nature of the definition that orthogonal (resp., unitary) tensors of size $n_1 \times \dots \times n_d \times n_{d+1}$, where $n_1 \le \dots \le n_d \le n_{d+1}$, can exist only if orthogonal (resp., unitary) tensors of size $n_2 \times \dots \times n_{d+1}$ exist. This rules out, for instance, the existence of orthogonal $3 \times 3 \times 3 \times 3$ tensors, and, more generally, the existence of unitary tensors when $n_{d-2} n_{d-1} > n_d$ (cf.~\eqref{eq: nonsharpness for higher-order complex}).

In the real case, the construction of orthogonal $n \times n \times n$ tensors from the multiplication tables~\eqref{eq:complex mult}--\eqref{eq:octonion mult} in section~\ref{sec: Hurwitz problem} is very explicit. The construction can be extended to higher orders as follows.

\begin{theorem}\label{th:higher-order construction}
Let $d \ge 2$, $n \in \{2,4,8\}$, $n_1 \le \dots \le n_d$, and $\bX \in \R^{n_1 \times \dots \times n_d}$ be orthogonal. For any fixed $\mu \in \{1,\dots,d-1\}$ satisfying $n \le n_\mu$, take any $n$ slices $\bX_1,\dots,\bX_n \in \R^{[\mu]}$ from $\bX$ perpendicular to mode $\mu$. Then a real orthogonal tensor of order $d+1$ and size $n_1 \times \dots \times n_{\mu-1} \times n\times n \times n_{\mu+1} \times \dots \times n_d$ can be constructed from the tables~{\upshape(\ref{eq:complex mult}--\ref{eq:octonion mult})}, respectively, using $\bX_k$ instead of $e_k$.
\end{theorem}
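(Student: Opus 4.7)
My plan is to verify orthogonality of the constructed tensor $\bY$ via the length-preserving characterization in Proposition~\ref{prop:length-preserving}, rather than through the recursive Definition~\ref{def:ot1}, which is awkward when two new modes are inserted in the middle of the index list. Since orthogonality is invariant under permutation of modes (see the remarks after Definition~\ref{def:ot1}), the placement of the two new modes does not matter, and the mode of size $n_d$ remains the largest and can be taken as the codomain of the induced $d$-form. The task then reduces to showing
\[
 \|\omega_\bY(u^1,\dots,u^{\mu-1},a,b,u^{\mu+1},\dots,u^{d-1})\|_2 = \|a\|_2 \|b\|_2 \prod_{i \neq \mu} \|u^i\|_2
\]
for arbitrary $u^i \in \R^{n_i}$ and $a,b \in \R^n$.

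The first step is a slice-level observation. Fix unit vectors $u^i$ for $i \in \{1,\dots,d-1\} \setminus \{\mu\}$. Orthogonality of $\bX$ combined with Proposition~\ref{prop:length-preserving} implies that the linear map
\[
 \Phi \colon \R^{n_\mu} \to \R^{n_d}, \qquad v \mapsto \omega_\bX(u^1,\dots,u^{\mu-1},v,u^{\mu+1},\dots,u^{d-1})
\]
is an isometric embedding. Consequently, for any indices $k_1,\dots,k_n \in \{1,\dots,n_\mu\}$, the vectors $z_l := \Phi(e_{k_l}) \in \R^{n_d}$ form an orthonormal system. Unwinding the definitions, $z_l$ is precisely the chosen slice $\bX_{k_l} \in \V^{[\mu]}$ contracted with the $u^i$'s in all of its remaining modes except the last.

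The second step uses the explicit structure of $\bY$. Let $\mathbf{T} \in \{\bX_\C,\bX_\mathbb{H},\bX_\mathbb{O}\}$ denote the chosen $n \times n \times n$ multiplication tensor from~(\ref{eq:complex mult})--(\ref{eq:octonion mult}). By construction, $\bY(\dots,j,k,\dots) = \sum_{l=1}^n \mathbf{T}(j,k,l)\,\bX_{k_l}$, where the slice is taken along the two new modes at positions $\mu$ and $\mu+1$. Contracting $\bY$ with the remaining $u^i$'s and with $a,b$ in the new modes, and applying the first step, yields
\[
 \omega_\bY(u^1,\dots,a,b,\dots,u^{d-1}) = \sum_{j,k,l=1}^n a_j b_k \mathbf{T}(j,k,l)\, z_l = \sum_{l=1}^n c_l z_l,
\]
where $c = \mathbf{T} \times_1 a \times_2 b \in \R^n$.

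Finally, $\mathbf{T}$ itself is orthogonal by Theorem~\ref{th: Hurwitz result}, so Proposition~\ref{prop:length-preserving} gives $\|c\|_2 = \|a\|_2\|b\|_2$; since the $z_l$ are orthonormal in $\R^{n_d}$, we obtain $\|\sum_l c_l z_l\|_2 = \|c\|_2 = \|a\|_2\|b\|_2$. This proves the length-preserving identity when $\|u^i\|=1$ for all $i$, and the general case follows by multilinearity. The main difficulty I anticipate is purely bookkeeping --- tracking the insertion of the two new modes and the corresponding index shifts of $\omega_\bY$ --- but once that is cleanly fixed, the argument reduces to composing two orthogonality facts: $\omega_\bX$ is a slot-wise isometry, and the composition-algebra multiplication tensor $\mathbf{T}$ is itself orthogonal.
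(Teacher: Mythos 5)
Your proof is correct, and it takes a genuinely different route from the paper's. The paper does not verify the length-preserving identity directly: it computes $\| \bY \|_\frob$, reduces the claim to showing $\| \bY \|_2 \le 1$ via the equivalence (b)$\Rightarrow$(a) of Theorem~\ref{th: main theorem for orthogonal tensors} together with the trivial bound~\eqref{eq: trivial lower bound}, and obtains that spectral norm bound by writing $\bY \times_\mu a \times_{\mu+1} b = \sum_{l} c_l \bX_l$ with $c = \mathbf{T} \times_1 a \times_2 b$, applying the Cauchy--Schwarz inequality to $\langle \sum_l c_l \bX_l, \bZ\rangle$ for a unit rank-one $\bZ$, and then invoking the slice characterization of the spectral norm (Proposition~\ref{prop: characterization of spectral norm}) to bound $\sum_l \abs{\langle \bX_l, \bZ\rangle}^2$ by $\| \bX \|_2^2 = 1$. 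You share the central identity $\bY \times_\mu a \times_{\mu+1} b = \sum_l c_l \bX_l$ and the use of the orthogonality of $\mathbf{T}$, but you finish by contracting all remaining modes and observing --- via polarization, since a norm-preserving linear map over $\R$ preserves inner products --- that the vectors $z_l$ form an orthonormal system in $\R^{n_d}$, so that $\|\sum_l c_l z_l\|_2 = \|c\|_2 = \|a\|_2\|b\|_2$ exactly. This buys you a direct verification of Proposition~\ref{prop:length-preserving} with an exact norm identity rather than a one-sided bound, and avoids both Theorem~\ref{th: main theorem for orthogonal tensors} and Proposition~\ref{prop: characterization of spectral norm}; the paper's version is shorter because it leans on that machinery. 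Two small points you (like the paper) use implicitly: the $n$ chosen slices must be distinct, since otherwise the $z_l$ are not orthonormal (and the paper's appeal to Proposition~\ref{prop: characterization of spectral norm} would likewise fail); and your appeal to permutation invariance of orthogonality is legitimate because the codomain mode $n_d$ remains a largest mode of $\bY$, so only the ordering of the domain arguments of $\omega_\bY$ is affected.
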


The proof is given further below.
\rr{Here, using $\bX_k$ instead of $e_k$ in the $(i,j)$th entry in~\eqref{eq:complex mult}--\eqref{eq:octonion mult}  means constructing a tensor $\bX$ of size $n_1 \times \dots \times n_{\mu-1} \times n\times n \times n_{\mu+1} \times \dots \times n_d$ such that
$\bX(:,\ldots,:,i,j,:,\ldots,:)=\bX_k$.}

As an example, $[10,10,16]$ is an admissible triple by the table~\eqref{table for 10-16}. Hence, by the theorem above, orthogonal tensors of size $8 \times \dots \times 8 \times 10 \times 16$ exist for any number $d-2$ of 8's. So the naive bound~\eqref{eq: trivial lower bound} (which equals $1/\sqrt{10 \cdot 8^{d-2}}$ in this example) for the best rank-one approximation ratio is sharp in $\R^{8 \times \dots \times 8 \times 10 \times 16}$. This is in contrast to the restrictive condition in Proposition~\ref{prop: sharpness for tall tensors}. In particular, in light of Theorem~\ref{th: Hurwitz result}, we have the following immediate corollary of Theorem~\ref{th:higher-order construction}.

\begin{corollary} \label{th:higher-order-cubic}
Real orthogonal $n \times \dots \times n$ tensors of order $d \ge 3$ exist if and only if $n = 1,2,4,8$. Consequently,
\[
\App_d(\R;n,\dots,n) = \frac{1}{\sqrt{n^{d-1}}}
\]
if and only if $n = 1,2,4,8$ for $d \ge 3$. Otherwise, the value of $\App_d(\R;n,\dots,n)$ must be larger.
\end{corollary}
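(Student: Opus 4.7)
The plan is to combine Hurwitz's classification of orthogonal $n\times n\times n$ tensors (Theorem~\ref{th: Hurwitz result}) with the recursive construction of Theorem~\ref{th:higher-order construction} for the sufficient direction, and with the inductive nature of Definition~\ref{def:ot1} for the necessary direction. The final statement about $\App_d(\R;n,\dots,n)$ will then drop out from Corollary~\ref{cor: main corollary}.

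For the ``if'' direction, suppose $n\in\{1,2,4,8\}$. The case $n=1$ is trivial. For $n\in\{2,4,8\}$, start from the orthogonal tensor $\bX^{(3)}\in\R^{n\times n\times n}$ given by the multiplication tensor $\bX_\C$, $\bX_\mathbb{H}$, or $\bX_\mathbb{O}$ in \eqref{eq:complex mult}--\eqref{eq:octonion mult}. To go from order $d$ to order $d+1$, apply Theorem~\ref{th:higher-order construction} to $\bX^{(d)}\in\R^{n\times\dots\times n}$, choosing any mode $\mu$ (all dimensions equal $n$, so $n\le n_\mu$ is automatic) and taking the $n$ slices $\bX^{(d)}_1,\dots,\bX^{(d)}_n$ perpendicular to mode $\mu$; plugging them into the corresponding multiplication table in place of the standard unit vectors $e_k$ yields an orthogonal tensor $\bX^{(d+1)}\in\R^{n\times\dots\times n}$ of order $d+1$. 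Induction produces an orthogonal cubic tensor of every order $d\ge 3$.

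For the ``only if'' direction, suppose there exists an orthogonal tensor $\bX\in\R^{n\times\dots\times n}$ of order $d\ge 3$. By Proposition~\ref{prop: orthogonality of subtensors} (or equivalently, by $d-3$ successive contractions with unit vectors as in Definition~\ref{def:ot1}), every order-three subtensor obtained by contracting $d-3$ modes with standard basis vectors is itself orthogonal, giving an orthogonal tensor in $\R^{n\times n\times n}$. Theorem~\ref{th: Hurwitz result} then forces $n\in\{1,2,4,8\}$.

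Finally, for the approximation ratio, apply Corollary~\ref{cor: main corollary} with $n_1=\dots=n_d=n$: the value $\App_d(\R;n,\dots,n)=1/\sqrt{n^{d-1}}$ is attained if and only if orthogonal tensors exist in $\R^{n\times\dots\times n}$, and is strictly larger otherwise. Combining with the equivalence established above completes the proof. No real obstacle is anticipated: the ``if'' part is a straightforward induction already packaged in Theorem~\ref{th:higher-order construction}, and the ``only if'' part is an immediate reduction to the third-order Hurwitz result; the only thing to be slightly careful about is noting that contracting with standard unit vectors indeed yields a subtensor that is itself orthogonal, which is exactly the content of Proposition~\ref{prop: orthogonality of subtensors}.
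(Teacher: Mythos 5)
Your proposal is correct and follows essentially the same route as the paper, which presents this corollary as an immediate consequence of Theorem~\ref{th:higher-order construction} (iterated to build cubic orthogonal tensors of every order $d\ge 3$ for $n=1,2,4,8$), the recursive nature of Definition~\ref{def:ot1} (contracting modes to reduce any higher-order cubic orthogonal tensor to an orthogonal $n\times n\times n$ tensor, so that Theorem~\ref{th: Hurwitz result} rules out all other $n$), and Corollary~\ref{cor: main corollary} for the statement about $\App_d$. The only cosmetic remark is that Proposition~\ref{prop: orthogonality of subtensors} concerns subtensors of the same order $d$ with restricted dimensions in modes $1,\dots,d-1$, so your parenthetical alternative via successive contractions with unit vectors is the cleaner justification for the order reduction, though restricting modes to singleton index sets and squeezing amounts to the same thing.
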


In combination with Proposition~\ref{prop: orthogonality of subtensors}, this corollary implies that lots of orthogonal tensors in low dimensions exist.

\begin{corollary}\label{cor: existence in small dimensions}
  If $\max\limits_{1\le\mu\le d}n_\mu=1,2,4,8$, then orthogonal tensors exist in $\R^{n_1 \times \dots \times n_d}$.
\end{corollary}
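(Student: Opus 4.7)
The plan is to combine Corollary~\ref{th:higher-order-cubic} (existence of orthogonal cubical tensors of order $d \ge 3$ for $n \in \{1,2,4,8\}$) with Proposition~\ref{prop: orthogonality of subtensors} (orthogonality is inherited by subtensors). Concretely, without loss of generality assume that the modes are sorted as $n_1 \le \dots \le n_d$, and write $n = n_d = \max_\mu n_\mu \in \{1,2,4,8\}$.

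First I would dispose of the low-order cases $d=1$ and $d=2$, which do not require the Hurwitz machinery at all. For $d=1$, any unit vector in $\R^{n_1}$ is orthogonal by Definition~\ref{def:ot1}. For $d=2$, since $n_1 \le n_2$, a real $n_1 \times n_2$ matrix with pairwise orthonormal rows exists and is orthogonal in the sense of Section~\ref{sec: matrices} and Definition~\ref{def:ot1}.

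The main step is the case $d \ge 3$. By Corollary~\ref{th:higher-order-cubic}, since $n \in \{1,2,4,8\}$, there exists an orthogonal tensor $\bX^0 \in \R^{n \times \dots \times n}$ of order $d$. Now apply Proposition~\ref{prop: orthogonality of subtensors} to $\bX^0$: any $n_1 \times \dots \times n_{d-1} \times n$ subtensor (e.g.\ the block $\bX^0(1{:}n_1,\ldots,1{:}n_{d-1},:)$) is again orthogonal. Because $n_d = n$ by our sorting, this subtensor is of the desired size $n_1 \times \dots \times n_d$, which proves the claim.

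The argument is essentially a one-line deduction from earlier results, so there is no real obstacle. The only subtlety worth a brief remark is that Proposition~\ref{prop: orthogonality of subtensors} only shrinks the first $d-1$ modes while keeping the last fixed; this matches our setup precisely because the maximum mode size $n$ coincides with $n_d$ in the sorted configuration, so no further invocation of the permutation invariance of orthogonality (mentioned in the paragraph after Definition~\ref{def:ot1}) is needed.
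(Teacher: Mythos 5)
Your proof is correct and follows essentially the same route as the paper, which obtains the corollary by combining Corollary~\ref{th:higher-order-cubic} with Proposition~\ref{prop: orthogonality of subtensors} to cut an $n_1 \times \dots \times n_{d-1} \times n$ subtensor out of a cubical orthogonal tensor. Your explicit handling of $d=1,2$ and of the sorting convention is a harmless (and slightly more careful) elaboration of the same argument.
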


\begin{proof}[Proof of Theorem~\ref{th:higher-order construction}]
Without loss of generality, we assume $\mu=1$. Let $\bY \in \R^{n \times n \times n_2 \times \dots \times n_d}$ be a tensor constructed in the way described in the statement  from an orthogonal tensor $\bX$. The slices $\bX_k$ of $\bX$ are then orthogonal tensors of size $n_2 \times \dots \times n_d$. The Frobenius norm of $\bY$ takes the correct value
\[
\| \bY \|_\frob = \sqrt{n^2 \cdot \prod_{\mu = 2}^{d-1} n_\mu}.
\]
According to Theorem~\ref{th: main theorem for orthogonal tensors}(a), we hence have to show that $\| \bY \|_2 = 1$. By~\eqref{eq: trivial lower bound}, it is enough to show $\| \bY \|_2 \le 1$. To do so, let $\omega(u,v) = \bX_0 \times_1 u \times_2 v$ denote the multiplication in the composition algebra $\R^n$, that is, $\bX_0$ is the corresponding multiplication tensor $\bX_\C$, $\bX_{\mathbb{H}}$ or $\bX_{\mathbb{O}}$ from~\eqref{eq:complex mult}--\eqref{eq:octonion mult} depending on the considered value of $n$. Then it holds that
\begin{equation}\label{eq: contraction in first two modes}
\bY \times_1 u \times_2 v = \sum_{k=1}^n \omega_k(u,v) \bX_k.
\end{equation}
Let $\| u \|_2 = \| v \|_2 =  1$. Then, by~\eqref{eq: length preserving}, $\| \omega(u,v) \|_2 = 1$. Further let $\bZ$ be a rank-one tensor in $\R^{n_2 \times \dots \times n_d}$ of Frobenius norm one. By~\eqref{eq: contraction in first two modes} and the Cauchy--Schwarz inequality, it then follows that
\begin{align*}
\abs{\langle \bY, u \otimes v \otimes \bZ \rangle_\frob}^2 &= \left(\sum_{k=1}^n \omega_k(u,v) \langle \bX_k , \bZ \rangle_\frob \right)^2 \le \sum_{k=1}^n \abs{\langle \bX_k , \bZ \rangle_\frob}^2.
\end{align*}
By Proposition~\ref{prop: characterization of spectral norm}, the right expression is bounded by $\| \bX \|_2^2$, which equals one by Theorem~\ref{th: main theorem for orthogonal tensors}(a). This proves $\| \bY \|_2 \le 1$.
\end{proof}

\section{Accurate computation of spectral norm}\label{sec: accurate computation}

In the final section, we present some numerical experiments regarding the computation of the spectral norm. We compare state-of-the-art algorithms implemented in the Tensorlab~\cite{tensorlab} toolbox with our own implementation of an alternating SVD method that has been proposed for more accurate spherical maximization of multilinear forms via two-factor updates. It will be briefly explained in section~\ref{sec: ASVD}.

The summary of algorithms that we used for our numerical results is as follows.

\begin{description}
    \item[\normalfont \texttt{cpd}] This is the standard built-in algorithm for low-rank CP approximation in Tensorlab. To obtain the spectral norm, we use it for computing the best rank-one approximation. Internally, \texttt{cpd} uses certain problem-adapted nonlinear least-squares algorithms~\cite{SoberVanBarelDeLathauwer2013}. When used for rank-one approximation as in our case, the initial rank-one guess $u^1 \otimes \dots \otimes u^d$ is obtained from the truncated higher-order singular value decomposition (HOSVD)~\cite{DeLathauweretal2000a,DeLathauweretal2000b}, that is, $u^\mu$ is computed as a dominant left singular vector of a $\{\mu\}$-matricization ($t = \{\mu\}$ in~\eqref{eq: t-matricization}) of tensor $\bX$. The rank-one tensor obtained in this way is known to be nearly optimal in the sense that $\| \bX - u^1 \otimes \dots \otimes u^d \|_\frob \le \sqrt{d} \| \bX - \bY_1 \|_\frob$, where $\bY_1$ is a best rank-one approximation.
    \item[\normalfont \texttt{cpd} (random)] The same method, but using an option to use a random initial guess $u^1 \otimes \dots \otimes u^d$.
    \item[\normalfont ASVD (random)] Our implementation of the ASVD method using the same random initial guess as \texttt{cpd} (random).
    \item[\normalfont ASVD (\texttt{cpd})] The ASVD method using the result of \texttt{cpd} (random) (which was often better than \texttt{cpd}) as the initial guess, i.e., ASVD is used for further refinement. The improvement in the experiments in sections~\ref{sec: experiment 1}--\ref{sec: experiment 3} is negligible (which indicates rather strong local optimality conditions for the \texttt{cpd} (random) solution), and so results for this method are reported only for random tensors in section~\ref{sec: experiment 4}.
\end{description}

\subsection{The ASVD method}\label{sec: ASVD}
The ASVD method is an iterative method to compute spectral norm and best rank-one approximation of a tensor via~\eqref{eq: def spectral norm}. In contrast to the higher-order power method (which updates one factor at a time), it updates two factors of a current rank-one approximation $u^1 \otimes \dots \otimes u^d$ simultaneously, while fixing the others, in some prescribed order. This strategy was initially proposed in~\cite{DeLathauweretal2000b} (without any numerical experiments) and then given later in more detail in~\cite{friedland2013best}. Update of two factors has also been used in a framework of the maximum block improvement method in~\cite{ChenHeLiZhang2012}. Convergence analysis for this type of method was conducted recently in~\cite{Yangetal2016}.

In our implementation of ASVD the ordering of the updates is overlapping in the sense that we cycle between updates of $(u^1,u^2)$, $(u^2,u^3)$, and so on. Assume that the algorithm tries to update the first two factors $u^1$ and $u^2$ while $u^3,\dots,u^d$ are fixed. To maximize the value $\langle \bX, u^1 \otimes u^2 \otimes \dots \otimes u^d \rangle_\frob$ for $u^1, u^2$ with $\| u^1 \| = \| u^2 \| = 1$, we use the simple fact that
\begin{align*}
   \langle \bX, u^1 \otimes u^2 \otimes \dots \otimes u^d \rangle_\frob &= (u^1)^T (\bX \times_3 u^3 \dots \times_d u^d) u^2.
\end{align*}
Therefore, we can find the maximizer $(u^1, u^2)$ as the top left and right singular vectors of the matrix $\bX \times_3 u^3 \dots \times_d u^d$.

\subsection{Orthogonal tensors}\label{sec: experiment 1}

We start by testing the above methods for the orthogonal tensors \eqref{eq:complex mult}--\eqref{eq:octonion mult}, for which we know that the spectral norm after normalization is $1/n$. The result is shown in Table~\ref{table:orthogonal}: all the methods easily find a best rank-one approximation.
It is worth noting that the computed approximants are not always the same, due to the nonuniqueness described in Remark~\ref{rem:nonuniqueness}.

\begin{table}[t]
    \centering
    \caption{Spectral norm estimations for orthogonal tensors}
    \label{table:orthogonal}
    \begin{tabular}{cccc}
    \hline
        $n$ & \texttt{cpd} & \texttt{cpd} (random) & ASVD (random)  \\
    \hline
2&0.500000&0.500000&0.500000 \\
4&0.250000&0.250000&0.250000 \\
8&0.125000&0.125000&0.125000 \\
    \hline
    \end{tabular}
\end{table}

\subsection{Fourth-order tensors with known spectral norm}\label{sec: experiment 2}

\begin{figure}[t]
    \centering
    \includegraphics[width=.7\linewidth]{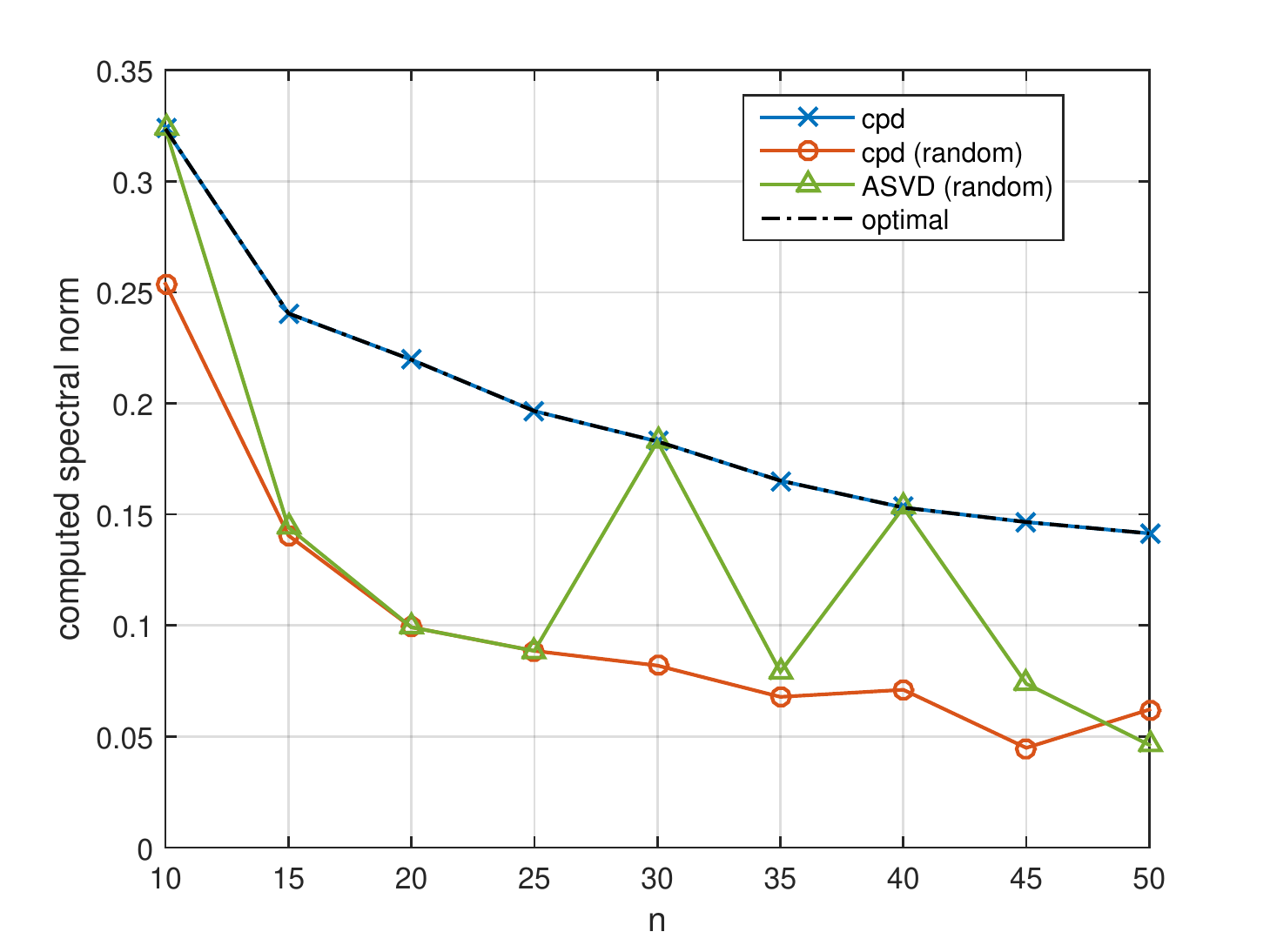}
    \caption{Results for fourth-order tensors with known spectral norms.}
    \label{fig:knownopt}
\end{figure}

In \cite{He2010}, the following examples of fourth-order tensors with known spectral norms are presented. Let
\[
\bX=\sum_{i=1}^m A_i\otimes B_i \mbox{ with } A_i, B_i \in\R^{n\times n} \mbox{ being symmetric},
\]
such that all the eigenvalues of $A_i$ and $B_i$ are in $[-1,1]$, and there are precisely two fixed unit vectors $a,b\in\R^n$ (up to trivial scaling by $-1$)
satisfying
\[
a^{T} A_i a = b^{T} B_i b = 1, \quad i=1,\dots,m.
\]
Clearly, for any unit vectors $x,y,z,w\in\R^{n}$, one has $x^{T}A_iy \le 1 $ and $y^{T}B_iw \le 1$, and so
\[
\langle \bX, x\otimes y\otimes z\otimes w\rangle_\frob \le m = \langle \bX, a\otimes a\otimes b\otimes b\rangle_\frob.
\]
Therefore, $\|\bX\|_2=m$ and $m \cdot a \otimes a \otimes b \otimes b$ is a best rank-one approximation. Moreover, it is not difficult to check that $a$ is the dominant left singular vector of the first ($t = \{1\}$ in~\eqref{eq: t-matricization}) and second ($t = \{2\}$) principal matrix unfolding of $\bX$, while $b$ is the dominant left singular vector of the third and fourth principal matricization. Therefore, for tensors of the considered type, the HOSVD truncated to rank one yields a best rank-one approximation $m \cdot a \otimes a \otimes b \otimes b$.

We construct tensors $\bX$ of this type for $n=10,15,20,\dots,50$ and $m = 10$, normalize them to Frobenius norm one (after normalization the spectral norm is $m/\| \bX \|_\frob$), and apply the considered methods. The results are shown in Figure~\ref{fig:knownopt}. As explained above, the method $\texttt{cpd}$ uses HOSVD for initialization, and indeed it found the optimal factors $a$ and $b$ immediately. Therefore, the corresponding curve in Figure~\ref{fig:knownopt} matches the precise value of the spectral norm. We observe that for most $n$, the methods with random initialization found only suboptimal rank-one approximations.
However, ASVD often found better approximations and in particular found optimal solutions for $n = 10, 30, 40$.

\subsection{Fooling HOSVD initialization}\label{sec: experiment 3}

In the previous experiment the HOSVD truncation yielded the best rank-one approximation. It is possible to construct tensors for which the truncated HOSVD is not a good choice for intialization.

Take, for instance, an $n \times n \times n$ tensor $\bX_n$ with slices
\begin{equation}\label{eq: fooling tensor}
\bX_n(\colon,\colon,k) = S_n^{k-1},
\end{equation}
where $S_n \in \R^{n \times n}$ is the ``shift'' matrix:
\[
    S_n = \begin{bmatrix}
        0 & 1 & &&\\
          & 0 & 1 &&\\
          &   & \ddots & \ddots & \\
          &   & & 0 & 1 \\
        1 &   & &  &0
    \end{bmatrix}.
\]
This tensor has strong orthogonality properties: in any direction, the slices are orthogonal matrices, and parallel slices are pairwise orthogonal in the Frobenius inner product. In particular, $\| \bX_n \|_\frob = n$. However, $\bX_n$ is not an orthogonal tensor in the sense of Definition~\ref{def:ot1}, since $\| \bX_n \|_2 = \sqrt{n}$ (use Proposition~\ref{prop: characterization of spectral norm}). A possible (there are many) best rank-one approximation for $\bX_n$ is given by the ``constant'' tensor whose entries all equal $1/n$. Nevertheless, we observed that the method $\texttt{cpd}$ estimates the spectral norm of $\bX_n$ to be one, which, besides being a considerable underestimation for large $n$, would suggest that this tensor is orthogonal. Figure~\ref{fig:fooling} shows the experimental results for the normalized tensors $\bX_n / n$ and $n = 2,3,\dots,50$.

 \begin{figure}[t]
     \centering
     \includegraphics[width=.7\linewidth]{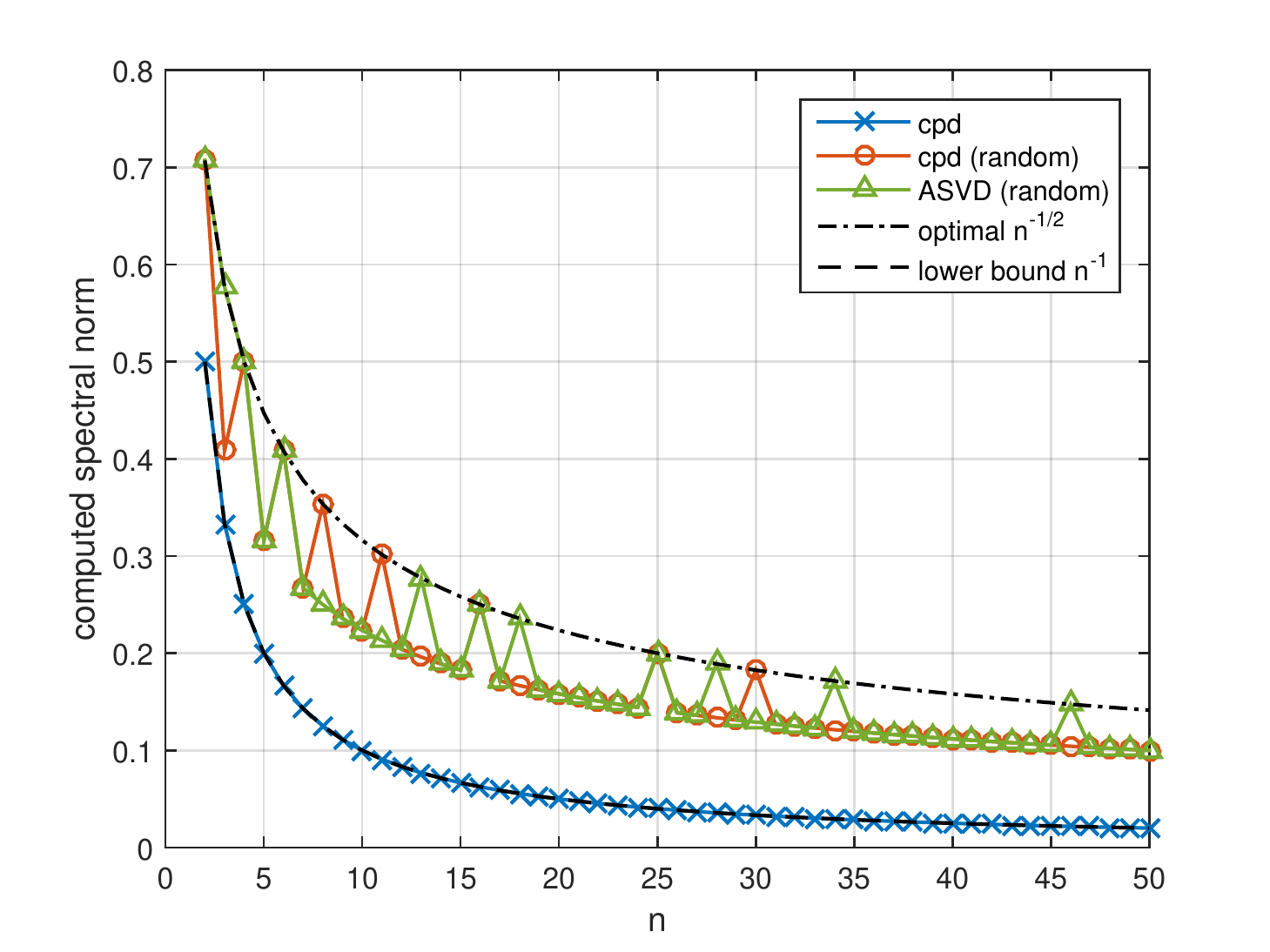}
     \caption{Results for the normalized tensors $\bX_n / n$ from~\eqref{eq: fooling tensor}.
     }
     \label{fig:fooling}
 \end{figure}

The explanation is as follows. The three principal matricization of $\bX_n$ into an $n \times n^2$ matrix all have pairwise orthogonal rows of length $\sqrt{n}$. The left singular vectors are hence just the unit vectors $e_1,\dots,e_n$. Consequently, the truncated HOSVD yields a rank-one tensor $e_i \otimes e_j \otimes e_k$ with $\bX_n(i,j,k) = 1$ as a starting guess. Obviously, $\langle \bX_n, e_i \otimes e_j \otimes e_k \rangle_\frob = 1$. The point is that $e_i \otimes e_j \otimes e_k$ is a critical point for the spherical maximization problem (and thus also for the corresponding rank-one approximation problem~\eqref{eq: best rank-one approximation problem})
\begin{equation}\label{eq: max problem}
\max f(u^1,u^2,u^3) = \langle \bX_n, u^1 \otimes u^2 \otimes u^3 \rangle_\frob \quad \text{s.t.} \quad \| u^1 \|_2 = \| u^2\|_2 = \| u^3 \|_2 = 1.
\end{equation}
To see this, note that $u^1 = e_i$ is the optimal choice for fixed $u^2 = e_j$ and $u^3 = e_k$, since $\bX_n$ has no other nonzero entries in fiber $\bX_n(\colon,j,k)$ except at position $i$. Therefore, the partial derivative $h^1 \mapsto f(h^1,e_j,e_k)$ vanishes with respect to the first spherical constraint, i.e., when $h^1 \perp e_i$ (again, this can be seen directly since such $h^1$ has a zero entry at position $i$). The observation is similar for other directions. As a consequence, $e_i \otimes e_j \otimes e_k$ will be a fixed-point of nonlinear optimization methods for~\eqref{eq: max problem} relying on the gradient or block optimization, thereby providing the function value $f(e_i,e_j,e_k) = 1$ as the spectral norm estimate.

Note that a starting guess $e_i \otimes e_j \otimes e_k$ for computing $\| \bX_n \|_2$ will also fool any reasonable implementation of ASVD. While for, say, fixed $u^3 = e_k$, any rank-one matrix $u^1 \otimes u^2$ of Frobenius norm one will maximize $\langle \bX_n, u^1 \otimes u^2 \otimes e_k \rangle_\frob = (u^1)^T S_n^{k-1} u^2$, its computation via an SVD of $S_n^{k-1}$ will again provide some unit vectors $u^1 = e_i$ and $u^2 = e_j$. We conclude that random starting guesses are crucial in this example. But even then, Figure~\ref{fig:fooling} indicates that there are other suboptimal points of attraction.

\subsection{Spectral norms of random tensors}\label{sec: experiment 4}

Finally, we present some numerical results for random tensors.
In this scenario, Tensorlab's \texttt{cpd} output can be slightly improved using ASVD. Table~\ref{table:vstensorlab-3d} shows the computed spectral norms averaged over 10 samples of real random $20 \times 20 \times 20$ tensors whose entries were drawn from the standard Gaussian distribution.
 Table~\ref{table:vstensorlab-4d} repeats the experiment but with a different size $20 \times 20 \times 20 \times 20$.
In both experiments, ASVD improved the output of \texttt{cpd} in the order of $10^{-3}$ and $10^{-4}$, respectively, yielding the best (averaged) result.

\begin{table}[h]
    \centering
    \caption{Averaged results for random tensors of size $20 \times 20 \times 20$.}
    \label{table:vstensorlab-3d}
    \begin{tabular}{cccc}
        \hline
    \texttt{cpd}  & \texttt{cpd} (random) & ASVD (random) & ASVD (\texttt{cpd}) \\
        \hline
        0.130927 & 0.129384 & 0.129583 & 0.130985 \\
        \hline
    \end{tabular}
\end{table}

\begin{table}[h]
    \centering
    \caption{Averaged results for random tensors of size $20 \times 20 \times 20 \times 20$.}
    \label{table:vstensorlab-4d}
    \begin{tabular}{cccc}
        \hline
    \texttt{cpd}  & \texttt{cpd} (random) & ASVD (random) & ASVD (\texttt{cpd}) \\
        \hline
        0.035697 &0.035265 & 0.034864 & 0.035707
        \\\hline
    \end{tabular}
\end{table}

\begin{figure}[t]
    \centering
    \includegraphics[width=0.7\linewidth]{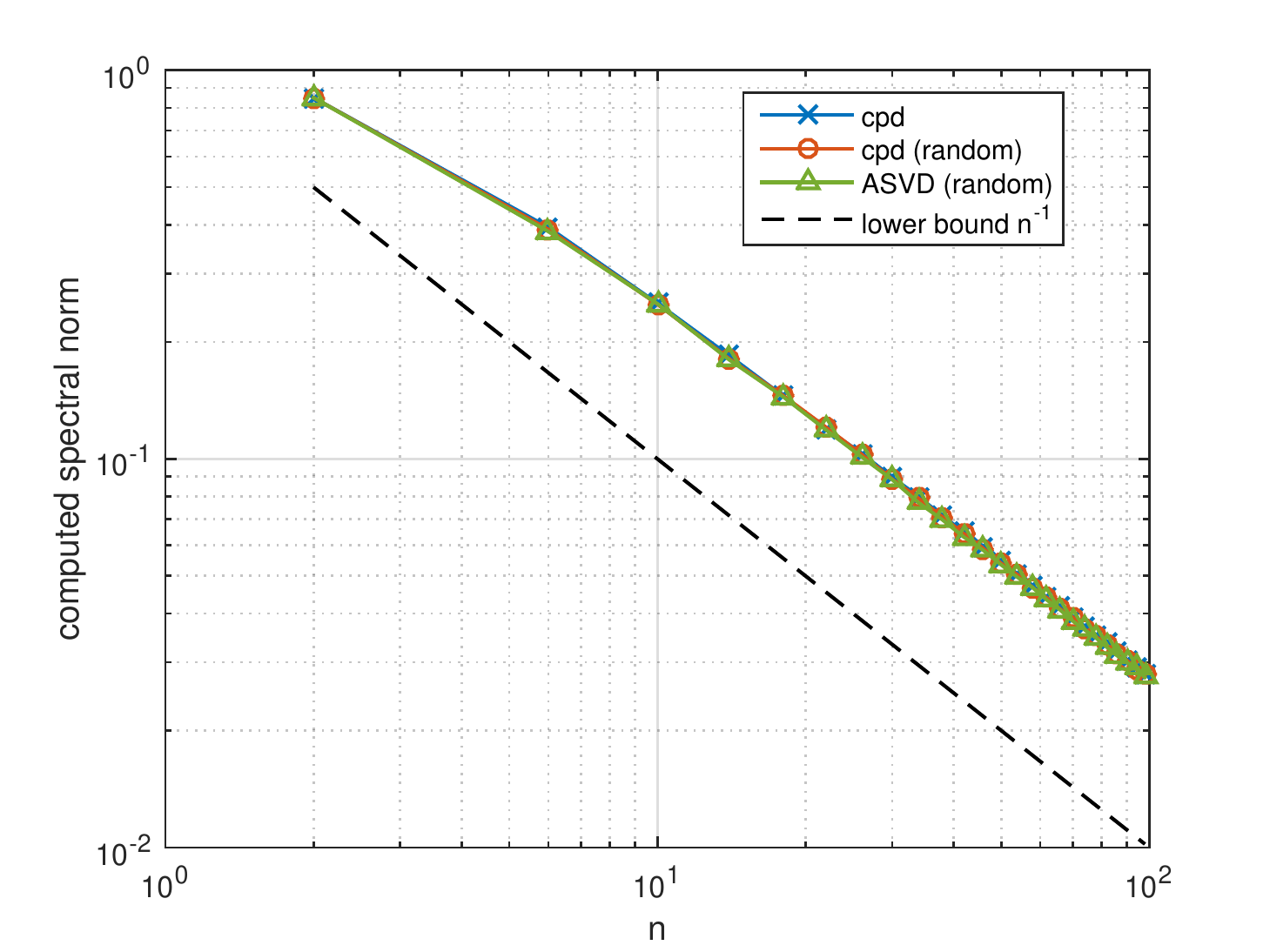}
    \caption{Averaged results for random $n \times n \times n$ tensors.}\label{fig:randomtensor}
\end{figure}

Figure~\ref{fig:randomtensor} shows the averaged spectral norm estimations of real random $n \times n \times n$ tensors for varying $n$ together with the naive lower bound $1/n$ for the best rank-one approximation ratio (we omit the curve for ASVD ({\texttt{cpd}) as it does not look very different from the other ones in the double logarithmic scale). The average is taken over 20 random tensors for each $n$. From Theorem~\ref{th: Hurwitz result} we know that the lower bound is not tight for $n \neq 1, 2, 4, 8$. Nevertheless, we observe an asymptotic order $O(1/n)$ for the spectral norms of random tensors. This illustrates the theoretical results mentioned in section~\ref{sec: random results}. In particular, $\App_3(\R;n, n, n) = O(1/n)$ as explained in section~\ref{sec: random results}; see~\eqref{eq:randombound} and~\eqref{eq: asymptotic behavior nxnxn}.

\section*{Acknowledgments}
The authors are indebted to Jan Draisma, who pointed out the connection between real orthogonal third-order tensors and the Hurwitz problem, and also to Thomas K\"uhn for bringing the valuable references~\cite{CobosKuehnPeetre1992,CobosKuehnPeetre1999,CobosKuehnPeetre2000,KuehnPeetre2006} to our attention.

\bibliographystyle{amsplain}
\bibliography{rank1ratio}

\end{document}